\newcolumntype{M}[1]{>{\raggedright}m{#1}}
\DeclareMathAlphabet{\mathpzc}{OT1}{pzc}{m}{it}
\newtheorem{theorem}{Theorem}[section]
\newtheorem{lemma}[theorem]{Lemma}
\newtheorem{corollary}[theorem]{Corollary}
\newtheorem{conjecture}[theorem]{Conjecture}
\newtheorem{claim}[theorem]{Claim}
\theoremstyle{definition}
\newtheorem{definition}[theorem]{Definition}
\newtheorem{example}[theorem]{Example}
\theoremstyle{remark}
\newtheorem{remark}[theorem]{Remark}
\numberwithin{equation}{section}
\begin{document}

\title{Blow-$ADE$ singularities and $\mu^*$-constant deformations}

\author{Christophe Eyral and Mutsuo Oka}

\address{C. Eyral, Institute of Mathematics, Polish Academy of Sciences, ul. \'Sniadeckich~8, 00-656 Warsaw, Poland}  
\email{cheyral@impan.pl} 
\address{M. Oka, 3-19-8 Nakaochiai, Shinjuku-ku, Tokyo 161-0032, Japan (Professor Emeritus of Tokyo Institute of Technology)}   
\email{okamutsuo@gmail.com}

\thanks{}

\subjclass[2020]{14M25, 14B05, 14J17, 32S55, 32S05}

\keywords{Complex surface singularities; blow-$ADE$ singularities; numerial control of the singularity type; Teissier's $\mu^*$-invariant; monodromy zeta-function.}
\begin{abstract}
We introduce a class of complex surface singularities \textemdash\ the blow-$ADE$ singularities \textemdash\ which are likely to be stable with respect to $\mu^*$-constant deformations. We prove such a stability property in several special cases. Here, we emphasize that we are not just considering deformation families for small values of the deformation parameter but families connecting any two elements in the $\mu^*$-constant stratum.
\end{abstract}

\maketitle

\markboth{Christophe Eyral and Mutsuo Oka}{Blow-$ADE$ singularities and $\mu^*$-constant deformations}

\section{Introduction}\label{intro}

The numerical control (i.e., the control by numerical invariants) 
of the ``singularity type'' of a singular point $O$ subjected to a certain deformation in a complex  analytic variety $V$ is a central question in singularity theory. The present paper is devoted to this problem in the case where $V$ is a surface $f=0$ of $\mathbb{C}^3$ and $O$ is an isolated singular point of $V$. More precisely, our goal here is to introduce a class of surface singularities \textemdash\ the blow-$ADE$ singularities \textemdash\ which are likely to be stable with respect to $\mu^*$-constant deformations.  

Roughly speaking, an isolated singular point $O$ of a surface $V\subseteq \mathbb{C}^3$ \textemdash\ which we shall assume to be the origin $\mathbf{0}\in\mathbb{C}^3$ \textemdash\ is a \emph{blow-$ADE$ singularity} if after an ordinary point blowing-up, the projective tangent cone $C$ of $V$ has only $ADE$ singularities, and if furthermore, there exists an integer $m\geq 1$ such that for any singular point $P$ of $C$, the Newton principal part, at $P$, of the strict transform of $V$ is of the form 
\begin{equation}\label{formintro}
h(x_2,x_3)+c\,x_1^m,
\end{equation}
 where $h(x_2,x_3)$ is the defining polynomial of $C$ at $P$ with respect to some local coordinates $\mathbf{x}=(x_1,x_2,x_3)$ and $c$ is a non-zero constant. The integer $m$ in \eqref{formintro} is called the \emph{blow-order} of the blow-$ADE$ singularity $(V,\mathbf{0})$. (For a more precise definition, see Definition~\ref{def-bNDsing}.) A typical example of such a singularity is given by a superisolated singularity, or more generally, by a L\^e--Yomdin singularity whose projective tangent cone has only $ADE$ singularities (see \cite{I,LeDT,Luengo,L-MH,A1}).

Throughout the paper, by a $\mu^*$-constant deformation of $f$, we mean a $\mu^*$-constant piecewise complex-analytic family $\{f_s\}$, $0\leq s\leq 1$, of complex-analytic functions $f_s$ such that $f_0=f$. Such a family is given by a continuous path 
\begin{equation*}
\gamma\colon s\in [0,1]\mapsto \gamma(s):=f_s\in\mathcal{M}(f)
\end{equation*}
in the $\mu^*$-constant stratum, $\mathcal{M}(f)$, of $f$ such that there are real numbers 
\begin{equation*}
0=s_0<s_1<\cdots<s_{j_0}=1,
\end{equation*}
and for each $0\leq j<j_0$, there is an open neighbourhood $U_j$ of the interval $[s_j,s_{j+1}]$ in $\mathbb{C}$ together with a complex-analytic function $\tilde\gamma_j$ on $U_j$ such that 
\begin{equation*}
\tilde\gamma_j\vert_{[s_j,s_{j+1}]}=\gamma\vert_{[s_j,s_{j+1}]}. 
\end{equation*}
(As usual, the $\mu^*$-constant stratum $\mathcal{M}(f)$ of $f$ consists of all function-germs whose Teissier's $\mu^*$-invariant (see \cite{Teissier2}) is the same as that of $f$.) Let us highlight that we are \emph{not} just considering deformations for small values of the parameter $s$ but deformations connecting $f$ to any other element in the path-connected component of $\mathcal{M}(f)$ containing $f$.

Finally, by ``stable singularity type'', we mean the following. If $f$ has a blow-$ADE$ singularity at $\mathbf{0}$ and if $\{f_s\}$ is a deformation of $f$, then we say that the singularity type of $\{f_s\}$ is \emph{stable} if each $f_s$ has a blow-$ADE$ singularity at $\mathbf{0}$ of the \emph{same type} as $f\equiv f_0$, which in turn means, firstly, that the projective tangent cones, $C_0$ and $C_s$, of $V_0=\{f_0=0\}$ and $V_s=\{f_s=0\}$ have the same number $k_0$ of singular points, and secondly, that we can make $k_0$ pairs $(P_0,P_s)\in \mbox{Sing}(C_0)\times \mbox{Sing}(C_s)$ involving all the singular points of $C_0$ and $C_s$ (here $\mbox{Sing}(\cdot)$ denotes the singular set) such that:
\begin{enumerate}
\item
the defining polynomials, $h_0$ and $h_s$, of $C_0$ and $C_s$ at $P_0$ and $P_s$ are given by the same expression in appropriate local coordinate systems $(x_{1},x_{2},x_{3})$ and $(x'_{1},x'_{2},x'_{3})$ at $P_0$ and $P_s$, respectively;
\item
the Newton principal parts of the strict transforms of $V_0$ and $V_s$ are given by
\begin{equation*}
h_0(x_{2},x_{3})+c_0\, x_{1}^m
\quad\mbox{and}\quad
h_s(x'_{2},x'_{3})+c_s\, {x'_{1}}^m
\end{equation*}
for the same blow-order $m$. 
\end{enumerate}
(For a more precise definition, see Definition \ref{def-sst}.)

Now, by a theorem of Teissier \cite[Th\'eor\`eme~3.9]{Teissier2}, we know that in a $\mu^*$-constant piecewise complex-analytic family of surface singularities, the embedded topological type of the family members is constant. In fact, the theorem of Teissier  applies to hypersurfaces and even says that the family is locally Whitney equisingular. We expect that for blow-$ADE$ singularities, the \emph{singularity type} (in the sense defined above) is also stable (see Conjecture~\ref{conj}). We give evidence for such a stability property in several special cases (see Theorem~\ref{mt1}). As an important corollary, we  get that if $f$ has a blow-$ADE$ singularity at~$\mathbf{0}$ of one of the specific types treated in Theorem \ref{mt1} and if $\mathcal{M}_0(f)$ is the path-connected component of $\mathcal{M}(f)$ containing $f$, then any $f'\in\mathcal{M}_0(f)$ also has a blow-$ADE$ singularity, and this blow-$ADE$ singularity is of the same type as that of $f$ (see Corollary \ref{cor}).

\section{Definition of blow-$ADE$ singularities}\label{sect-def}

Let $f(z_1,z_2,z_3)$ be an analytic function of order $d\geq 2$ in a neighbourhood of the origin $\mathbf{0}\in \mathbb{C}^3$, and let 
\begin{equation*}
f=\sum_{j\geq 0}f_{d+j}
\end{equation*}
 be its homogeneous decomposition. Here, $f_{d+j}$ is a homogeneous polymomial of degree $d+j$. We assume that $f(\mathbf{0})=0$ and that $f$ has an \emph{isolated} singularity at $\mathbf{0}$. We denote by $V(f)$ the (germ at~$\mathbf{0}$ of the) surface of $\mathbb{C}^3$ defined by $f$, and we write $C(f_{d+j})$ for the projective curve in $\mathbb{P}^2$ defined by the homogeneous polynomial $f_{d+j}$. We suppose that $f_d$ is \emph{reduced}, so that the curve $C(f_{d})$ has only \emph{isolated} singularities.
Without loss of generality, we may further assume that $f_d$ is convenient (i.e., up to a non-zero coefficient, $f_d$ has a monomial of the form $z_i^{\alpha_i}$ for each $1\leq i\leq 3$) and Newton non-degenerate on any face $\Delta$ of its Newton boundary if $\Delta$ is not the top-dimensional face. Indeed, it is easy to see that these two additional conditions are always simultaneously satisfied modulo a linear change of the coordinates $z_1,z_2,z_3$ (so, in particular, these conditions are not restrictive).

In order to define the blow-$ADE$ singularities, we consider the ordinary point blowing-up $\pi\colon X\to\mathbb{C}^3$ at~$\mathbf{0}$. Let $U_1:=\mathbb{P}^2\setminus \{z_1=0\}$ be the standard affine chart of $\mathbb{P}^2$ with coordinates $(z_2/z_1,z_3/z_1)$. Then, in the corresponding chart $X\cap (\mathbb{C}^3\times U_1)$ of $X$, with coordinates $\mathbf{y}\equiv(y_1,y_2,y_3):=(z_1,z_2/z_1,z_3/z_1)$, the pull-back $\pi^*f$ of $f$ by $\pi$ is given by
\begin{equation}\label{pull-back}
\pi^*f (\mathbf{y}) = y_1^{d}\bigg(f_{d}(1,y_2,y_3)+\sum_{j\geq 1} y_1^jf_{d+j}(1,y_2,y_3)\bigg).
\end{equation}
The first factor, $y_1^{d}$, corresponds to the exceptional divisor $E\simeq \mathbb{P}^2$, while the second one, $\pi^*f (\mathbf{y})/y_1^{d}$, represents the strict transform of $V(f)$. Put
\begin{equation}\label{ftilda}
\tilde f(\mathbf{y}):=\pi^*f (\mathbf{y})/y_1^{d}.
\end{equation}

Let $\text{Sing}(C(f_d))$ be the singular locus of $C(f_d)$, and let $m$ be an integer $\geq 1$.

\begin{definition}\label{def-bNDsing}
We say that $f$ has a \emph{blow-$ADE$} singularity of \emph{blow-order} $m$ at $\mathbf{0}$ if for any singular point $P\in \text{Sing}(C(f_d))$, there exists an ``admissible'' coordinate chart $(U_P,\mathbf{x}=(x_1,x_2,x_3))$ of $X$ at $P$ such that:
\begin{enumerate}
\item
the defining polynomial, $h(x_2,x_3)$, of $C(f_d)$ at $P$ in the coordinates $\mathbf{x}$ is: 
\vskip 1mm
\begin{enumerate}
\item[$\bullet$]
either $x_2^2+x_3^{n+1}$, where $n\geq 1$ ($A_n$-type);
\vskip 1mm
\item[$\bullet$]
or $x_2^2x_3+x_3^{n-1}$, where $n\geq 4$ ($D_n$-type);
\vskip 1mm
\item[$\bullet$]
or $x_2^3+x_3^4$ ($E_6$-type) or
$x_2^3+x_2x_3^3$ ($E_7$-type) or
$x_2^3+x_3^5$ ($E_8$-type);
\end{enumerate}
\vskip 1mm
(up to non-zero coefficients);
\vskip 1mm
\item
the Newton principal part of $\tilde f$ at $P$ in the coordinates $\mathbf{x}$ is of the form 
\begin{equation*}
h(x_2,x_3)+c\, x_1^m, 
\end{equation*}
where $c$ is a non-zero constant.
\end{enumerate}
\end{definition} 

Here, by an ``admissible'' coordinate chart, we mean an analytic chart $(U_P,\mathbf{x}=(x_1,x_2,x_3))$ such that $x_1=y_1$ and $(x_2,x_3)$ is an analytic coordinate change of $(y_2,y_3)$.

\begin{example}
A typical example of a blow-$ADE$ singularity of blow-order $m$ is given by a $m$-L\^e--Yomdin singularity \textemdash\ i.e., a singularity defined by a function $f=f_{d}+f_{d+m}+\cdots$ for which the intersection $\mbox{Sing}(C(f_d))\cap C(f_{d+m})$ is empty (see \cite{I,LeDT,Luengo,L-MH,A1}) \textemdash\ such that $C(f_d)$ has only $ADE$-singularities. Note that the condition $\mbox{Sing}(C(f_d))\cap C(f_{d+m})=\emptyset$ implies that $f_d$ is reduced and that $f$ has an isolated singularity at~$\mathbf{0}$ (see \cite[Theorem~2]{L-MH}). Of particular interest are the $1$-L\^e--Yomdin singularities, which are exactly Luengo's superisolated singularities. 
\end{example}

\begin{example}
Note that \emph{not} all blow-$ADE$ singularities are L\^e--Yomdin singularities. For example, consider any blow-$ADE$ singularity of blow-order $m$ of the form $f=f_d+f_{d+m}$. Then the singularity defined by 
\begin{equation*}
g:=f_d+c_1\ell f_d+c_2\ell^2 f_d+\cdots+c_{m-1}\ell^{m-1}f_d+f_{d+m}
\end{equation*}
 is also a blow-$ADE$ singularity of blow-order $m$ as long as the coefficients $c_1,\ldots,c_{m-1}$ are generically chosen. Here, $\ell$ is a linear form. However, $g$ is not a L\^e--Yomdin singularity as the intersection $\mbox{Sing}(C(f_d))\cap C(\ell f_d)$ is not empty.
\end{example}

In this paper, we shall focus on some specific types of blow-$ADE$ singularities which we define below.

\begin{definition}
A blow-$ADE$ singularity $f$ is called a \emph{pure blow-$A_1$ singularity} if each $P\in \text{Sing}(C(f_d))$ is of $A_1$-type. It is called a \emph{blow-$A$ singularity} if each $P\in \text{Sing}(C(f_d))$ is of $A_n$-type for some integer $n$ depending on $P$ (in particular, if $P'$ is another singular point with $P'\not=P$, then $P'$ can be of $A_{n'}$-type for $n'\not=n$). Finally, $f$ is called an \emph{even} blow-$A$ singularity if each singular point $P$ is of $A_n$-type for some \emph{even} integer $n\geq 2$ depending on $P$.
\end{definition}

To conclude this section, let us clarify what we exactly mean for two blow-$ADE$ singularities to have the ``same type''.

\begin{definition}\label{def-sst}
Let $f'$ be another analytic function of the same order $d$ as $f$, and let $f'=\sum_{j\geq 0}f'_{d+j}$ be its homogeneous decomposition.
Assume that both $f$ and $f'$ have a blow-$ADE$ singularity at $\mathbf{0}$.
Then we say that $f$ and $f'$ are two blow-$ADE$ singularities of the \emph{same type} if the following two conditions are satisfied:
\begin{enumerate} 
\item
$C(f_d)$ and $C(f'_d)$ have the same number $k_0$ of singular points, and we can make pairs $(P_k,P'_k)\in \mbox{Sing}(C(f_d))\times \mbox{Sing}(C(f'_d))$ such that for each $1\leq k\leq k_0$ the defining polynomials, $h(x_2,x_3)$ and $h'(x'_2,x'_3)$, of the curves $C(f_d)$ and $C(f'_d)$ at the points $P_k$ at $P'_k$, respectively, are given by the same expression in appropriate local coordinate systems $\mathbf{x}=(x_1,x_2,x_3)$ and $\mathbf{x}'=(x'_1,x'_2,x'_3)$ at $P_k$ and $P'_k$, respectively;
\item
the Newton principal parts of $\tilde f$ and $\tilde f'$ (see \eqref{ftilda}) are given by
\begin{equation*}
h(x_2,x_3)+c\, x_1^m
\quad\mbox{and}\quad
h'(x'_2,x'_3)+c'{x_1'}^m
\end{equation*}
(in particular, $f$ and $f'$ have the same blow-order $m$).
\end{enumerate}
\end{definition}

\section{$\mu^*$-constant deformations of blow-$ADE$ singularities}

We continue with the notations and assumptions set in the preamble of section \ref{sect-def}.

Here is our main result.

\begin{theorem}\label{mt1}
Assume that $f$ has, at $\mathbf{0}$, a blow-$ADE$ singularity of one of the following three  specific types:
\begin{enumerate}
\item
a pure blow-$A_1$ singularity of blow-order $m\geq 1$;
\vskip 1mm
\item
an even blow-$A$ singularity of blow-order $m=2$;
\vskip 1mm
\item
a blow-$ADE$ singularity of blow-order $m=1$.
\end{enumerate} 
In each case, if $\{f_s\}$, $0\leq s\leq 1$, is any $\mu^*$-constant piecewise complex-analytic family starting at $f$ (i.e., $f_0=f$), then each member $f_s$ of the family defines, at $\mathbf{0}$, a blow-$ADE$ singularity of the same type as that of $f$.
\end{theorem}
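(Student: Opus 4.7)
The plan is to show, in each of the three cases, that the set
\[
S \ :=\ \{s \in [0,1] : f_s \text{ has a blow-}ADE\text{ singularity of the same type as } f_0\}
\]
equals $[0,1]$. Since $\{f_s\}$ is piecewise complex-analytic, I would restrict to each subinterval $[s_j,s_{j+1}]$ on which $\{f_s\}$ extends to a genuine complex-analytic family on a neighborhood $U_j \subseteq \mathbb{C}$. The argument then reduces to an open-and-closed argument: for any $s_0 \in U_j \cap [0,1]$ at which the blow-$ADE$ type is the prescribed one, I need the same to hold for all $s$ in a small connected neighborhood inside $U_j\cap[0,1]$.

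The basic topological input comes from Teissier's theorem: $\mu^*$-constancy implies that $d := \mathrm{mult}_{\mathbf{0}}(f_s)$ and $\mu(f_s)$ are constant, and that the family $\{V(f_s)\}$ is Whitney equisingular, hence embedded topologically trivial. Consequently the monodromy zeta function $\zeta_{f_s}(t)$ is constant in $s$, and the projective tangent cones $C(f_{s,d})$ form a flat family of degree-$d$ plane curves in $\mathbb{P}^2 \simeq E$, where $E$ is the exceptional divisor of the blow-up $\pi\colon X \to \mathbb{C}^3$.

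The analytic heart of the proof is to exploit explicit formulas, specific to blow-$ADE$ singularities, that express $\mu(f)$ and $\zeta_f(t)$ in terms of $d$, the blow-order $m$, and the $ADE$ types of the singularities of $C(f_d)$. At each singular point $P$ of $C(f_d)$, the Newton principal part $h + c\,x_1^m$ of $\tilde f$ is Newton non-degenerate of Thom--Sebastiani type, so its local Milnor number is $(m-1)\mu(h)$ and its local zeta factor is explicit. Summing these contributions gives a formula of the shape
\[
\mu(f)\ =\ \Phi(d,m)\ +\ (m-1)\sum_{P \in \mathrm{Sing}(C(f_d))} \mu(h_P),
\]
together with a parallel formula for $\zeta_f(t)$. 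Combining the constancy of $(d,\mu,\zeta)$ with semicontinuity of local Milnor numbers of plane curves along the flat family $\{C(f_{s,d})\}$, I would argue that $C(f_{s,d})$ again has only $ADE$ singularities, in the same number and of the same types as $C(f_{0,d})$, and that the blow-order of $f_s$ is again $m$. Once types and $m$ are pinned down, the form $h'(x'_2,x'_3) + c'\,{x'_1}^m$ for the Newton principal part of $\tilde f_s$ at each singular point follows by choosing admissible local coordinates matching those used at $f_0$.

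The special features of the three cases enter this scheme as follows. In case (1) every $ADE$ type in sight is $A_1$, minimal in the $ADE$ hierarchy, so any semicontinuous worsening would strictly raise a local Milnor number and hence $\mu(f_s)$, a contradiction. In case (3), the condition $m=1$ makes the strict transform smooth at each singular point of the tangent cone, placing the problem in a superisolated-type setting where arguments in the spirit of \cite{Luengo,L-MH} can be adapted. I expect case (2) to be the main obstacle: an even-index singularity $A_{2r}$ admits Milnor-preserving deformations into $A$-configurations such as $A_{r-1} + A_r$ or $A_{2r-1}+A_0$, and ruling these out relies on combining the parity constraint ``all indices even'' with the specific equality $m=2$ and a finer comparison of zeta factors at the $ADE$ points to detect any would-be splitting.
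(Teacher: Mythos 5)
Your high-level framing correctly identifies the key input (Teissier's theorem forces constancy of the multiplicity $d$, the Milnor number, and the monodromy zeta-function, and hence constancy of the singularity data of the tangent cone $C(f_{s,d})$), and your observation that the local contribution at each $ADE$ point after blow-up is Thom--Sebastiani is also in the spirit of the paper. But there is a genuine gap at the crux of the argument, and the open-and-closed scaffolding obscures it.

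The gap is this: you never actually establish that, for $s \neq 0$ and at a singular point $P_k$ of $C(f_{s,d})$, the Newton principal part of $\tilde f_s$ in \emph{admissible} coordinates takes the required form $h(x_2,x_3) + c\,x_1^m$. You write that ``the blow-order of $f_s$ is again $m$'' and that ``once types and $m$ are pinned down, the form $h'(x'_2,x'_3)+c'\,{x'_1}^m$ \ldots follows by choosing admissible local coordinates matching those used at $f_0$''; but ``blow-order'' is only defined once $f_s$ is already known to be blow-$ADE$, and producing the admissible coordinates is exactly the nontrivial content of the theorem, not a bookkeeping step. In the paper this occupies the bulk of the proof. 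In case (1), Lemma~\ref{ml1} shows via an iterated admissible coordinate change that one can reach the normal form $x_1^d(x_2^2+x_3^2+c\,x_1^{\ell_k})$, and the termination of the iteration is itself nontrivial (Claim~\ref{ass35}, a Milnor-number lower bound via Kouchnirenko and semicontinuity); then Lemma~\ref{ml3} pins down $\ell_k=m$ by comparing the explicit zeta-function with that of $f_0$. In case (2), one first has to rule out a linear $x_1$-term (Claim~\ref{claim310}) by a zeta-comparison, and then a careful case analysis (Claims~\ref{(i)}--\ref{(iv)}), using the \emph{second zeta-multiplicity factor} and Varchenko's degree formula, rules out the bad configurations $a=0$ or $b\neq 0$. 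Your remark about potential splittings $A_{2r}\rightsquigarrow A_{r-1}+A_r$ in case (2) is off-target: Lemma~\ref{ml2} already guarantees that the local types of $C(f_{s,d})$ are rigid; what must be controlled is the $x_1$-direction, i.e.\ the possible appearance of low-order mixed monomials $x_1^j x_2$, $x_1^j x_3$, $x_1 x_3^q$ in $\tilde f_s$. In case (3) the argument is short, but you would still need to say that the absence of a linear $x_1$-term at some $P_k$ forces a mismatch in $\zeta^{(2)}_{f_s,\mathbf{0}}(t)$.

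Two secondary points. First, the open-and-closed argument is unnecessary: since Teissier gives $\zeta_{f_s,\mathbf{0}}=\zeta_{f_0,\mathbf{0}}$ for every $s$, the paper compares $f_s$ directly with $f_0$, with no connectedness argument. Second, your displayed formula $\mu(f)=\Phi(d,m)+(m-1)\sum_P\mu(h_P)$ has the wrong coefficient; the L\^e--Yomdin-type formula that drops out of Lemma~\ref{ml3} is $\mu(f)=(d-1)^3+m\sum_P\mu(h_P)$, and in any case the paper works with the full zeta-function (and its zeta-multiplicity factors), which is strictly finer than the single number $\mu$ and is essential for cases (2) and (3).
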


In other words, any blow-$ADE$ singularity of type (1), (2) or (3) is stable with respect to $\mu^*$-constant piecewise complex-analytic deformations. We expect that any blow-$ADE$ singularity is stable under such deformations. More precisely, we propose the following conjecture.

\begin{conjecture}\label{conj}
The conclusion of Theorem \ref{mt1} still holds true if $f$ defines a general blow-$ADE$ singularity of any blow-order $m\geq1$ at $\mathbf{0}$.
\end{conjecture}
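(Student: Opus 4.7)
The plan is to extend the strategy used for the three special cases of Theorem \ref{mt1} by combining Teissier's Whitney equisingularity with a refined invariant-theoretic analysis localised at each singular point of the projective tangent cone.

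First, I would express both the $\mu^*$-invariant and the monodromy zeta function of an arbitrary blow-$ADE$ singularity $f$ explicitly in terms of its discrete data: the multiplicity $d$, the unordered multiset of $ADE$-types of the singular points of $C(f_d)$, and the blow-order $m$. The former should generalize the L\^e--Yomdin-type formulas of \cite{Luengo,L-MH,A1}; the latter should follow from A'Campo's formula applied to the partial embedded resolution obtained by first performing the point blow-up $\pi$ of Section \ref{sect-def} and then resolving the $ADE$ singularities that appear on the strict transform of $V(f)$. Because the Newton principal part of $\tilde f$ at every $P\in \mathrm{Sing}(C(f_d))$ has the uniform shape $h(x_2,x_3)+c\,x_1^m$, each $P$ contributes a well-controlled local factor depending only on the $ADE$-type at $P$ and on $m$.

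Second, along a $\mu^*$-constant piecewise complex-analytic family $\{f_s\}$, Teissier's theorem forces constancy of the embedded topological type, hence of the monodromy zeta function, while the constancy of $\mu_1$ forces constancy of $d$. The number of singular points of $C(f_{s,d})$ is upper semi-continuous in $s$, and the sum of their local Milnor numbers is pinned down by $\mu_2$. Combining these constraints with the zeta-function factorisation, I would argue that the multiset of $ADE$-types of $\mathrm{Sing}(C(f_{s,d}))$ together with the blow-order $m$ must remain constant along $[0,1]$. Once this discrete data is shown to be preserved, the conclusion at each $P_s$ follows exactly as in the proof of Theorem \ref{mt1}: upper semi-continuity of the Milnor number of the strict transform, combined with Kouchnirenko's characterization of $\mu$-constancy by constancy of the Newton number, forces the Newton principal part of $\tilde f_s$ at $P_s$ to take the form $h_s(x_2,x_3)+c_s\,x_1^m$ in suitable admissible coordinates. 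Globalising from each sub-interval $[s_j,s_{j+1}]$ to $[0,1]$ is then automatic from the piecewise analytic structure of the family.

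The main obstacle --- and the reason Conjecture \ref{conj} remains open --- is the recognition step in the second paragraph: one must show that the monodromy zeta function, together with the remaining $\mu^*$-data, uniquely determines the multiset of $ADE$-types and the blow-order. In the three cases of Theorem \ref{mt1} this discrimination is essentially automatic: only a single $ADE$-type appears in case (1), while in cases (2)--(3) the blow-order is extremal and a direct Milnor-number count suffices. In the general setting, singular points of several different $ADE$-types contribute simultaneously, and it is a priori conceivable that two distinct blow-$ADE$ configurations (possibly with different blow-orders) produce the same combined contribution. Ruling this out will likely require auxiliary invariants beyond the zeta function --- for instance the Hodge spectrum of the singularity, or a finer geometric analysis of the versal deformation of each $ADE$ point inside the strict transform --- and it is precisely at this point that new ideas beyond those developed for Theorem \ref{mt1} would be needed.
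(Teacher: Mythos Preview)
The statement you are addressing is Conjecture~\ref{conj}, which the paper explicitly leaves open; there is no proof in the paper to compare your proposal against. Your write-up is accordingly not a proof but a research outline, and you yourself flag the decisive gap: you do not show that the monodromy zeta-function together with the remaining $\mu^*$-data determines the multiset of $ADE$-types and the blow-order. Since that recognition step is the whole content of the conjecture beyond what Theorem~\ref{mt1} already covers, your proposal does not advance past the paper's stated position.

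A few points in your summary of the special cases should be corrected, as they bear on what a genuine attack on the conjecture would need. First, Lemma~\ref{ml2} (imported from \cite{EO}) already gives that the \emph{number} of singular points of $C(f_{s,d})$ and their local embedded topological types are \emph{constant} in $s$, not merely upper semi-continuous; so the multiset of $ADE$-types of the tangent cone is preserved before any zeta-function argument is invoked. The open issue is therefore purely the blow-order and the shape of the Newton principal part of $\tilde f_s$, not the identification of the $ADE$-types themselves. Second, your description that ``in cases (2)--(3) \ldots a direct Milnor-number count suffices'' does not match the paper: case~(2) is handled by a delicate comparison of the second zeta-multiplicity factor $\zeta^{(2)}$ and of degrees of local zeta-functions (Claims~\ref{(i)}--\ref{(iv)}), and case~(3) also rests on $\zeta^{(2)}$; a Kouchnirenko/Milnor-number argument of the type you sketch appears only in case~(1) (Claim~\ref{ass35}). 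Finally, the obstruction identified in the paper (see the Remark closing \S\ref{sect-32}) is quite concrete: for $A_n$ with $n$ odd and suitable $q$, the extra face in the Newton boundary collapses and the degree inequality used in Claim~\ref{(ii)} reverses, so the zeta-function alone genuinely fails to separate configurations. Your suggestion of bringing in the Hodge spectrum or a versal-deformation analysis is a reasonable direction, but as it stands it is a hope rather than an argument.
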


Again, note that both Theorem \ref{mt1} and Conjecture \ref{conj} apply to piecewise complex-analytic families connecting \emph{any} two elements in a path-connected component of the $\mu^*$-constant stratum, and not just to small deformation families.

In particular, combined with \cite[Theorem 5.4]{EO2}, Theorem \ref{mt1} has the following important corollary.

\begin{corollary}\label{cor}
Let $f$ be a blow-$ADE$ singularity of one of the three specific types (1), (2) or (3) that appear in Theorem \ref{mt1}, and let $\mathcal{M}_0(f)$ be the path-connected component of $\mathcal{M}(f)$ containing $f$. (We recall that $\mathcal{M}(f)$ is the $\mu^*$-constant stratum of $f$.) Then, any $f'\in\mathcal{M}_0(f)$ defines a blow-$ADE$ singularity of the same type as that of $f$.
\end{corollary}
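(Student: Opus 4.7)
The plan is to reduce Corollary~\ref{cor} to Theorem~\ref{mt1} by upgrading an abstract path in $\mathcal{M}_0(f)$ to a concrete $\mu^*$-constant piecewise complex-analytic family, using the cited result \cite[Theorem~5.4]{EO2}. Concretely, I would proceed as follows.

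First, fix $f'\in\mathcal{M}_0(f)$ and, by definition of the path-connected component, choose a continuous path $\gamma\colon[0,1]\to\mathcal{M}(f)$ with $\gamma(0)=f$ and $\gamma(1)=f'$. The key point that needs to be invoked here is \cite[Theorem~5.4]{EO2}: it precisely guarantees that such a continuous path inside the $\mu^*$-constant stratum can be realized (possibly after reparametrization) by a piecewise complex-analytic family in the sense defined in the introduction, i.e., by a finite subdivision $0=s_0<s_1<\cdots<s_{j_0}=1$ and analytic extensions $\tilde\gamma_j$ on open neighbourhoods $U_j\supseteq[s_j,s_{j+1}]$ agreeing with $\gamma$ on each subinterval. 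Thus we obtain a $\mu^*$-constant piecewise complex-analytic family $\{f_s\}_{0\le s\le 1}$ starting at $f_0=f$ and ending at $f_1=f'$.

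Next, I apply Theorem~\ref{mt1}. Since $f$ has a blow-$ADE$ singularity at $\mathbf{0}$ of one of the three specific types (1), (2) or (3), the theorem yields that \emph{every} member $f_s$ of the family (in particular $f_1=f'$) has a blow-$ADE$ singularity at $\mathbf{0}$ of the same type as $f$. This gives the desired conclusion.

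There is really no additional work beyond the combination of these two statements; the substance has been packaged into \cite[Theorem~5.4]{EO2} (the ``path-to-family'' upgrade) and Theorem~\ref{mt1} (the stability along a single piecewise complex-analytic path). The only step requiring any care is verifying that the hypotheses of \cite[Theorem~5.4]{EO2} apply to a path connecting two arbitrary elements of $\mathcal{M}_0(f)$, rather than just to small neighbourhoods of a fixed germ; but this is exactly the feature emphasized in the introduction (we consider deformations connecting \emph{any} two elements of the $\mu^*$-constant stratum), so no local smallness assumption is needed, and the argument goes through.
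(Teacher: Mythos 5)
Your argument is exactly the paper's proof: invoke \cite[Theorem~5.4]{EO2} to upgrade path-connectedness in $\mathcal{M}_0(f)$ to a $\mu^*$-constant piecewise complex-analytic family joining $f$ to $f'$, then apply Theorem~\ref{mt1} to conclude that $f'$ has a blow-$ADE$ singularity of the same type. No gaps; the additional remark about the hypotheses of \cite[Theorem~5.4]{EO2} applying globally is consistent with how the paper uses that result.
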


\begin{proof}
If $f'\in\mathcal{M}_0(f)$, then, by \cite[Theorem 5.4]{EO2}, there exists a $\mu^*$-constant piecewise complex-analytic family connecting $f$ and $f'$. Now apply Theorem \ref{mt1}.
\end{proof}

The rest of the paper is devoted to the proof of Theorem \ref{mt1}. In each case, (1), (2) and (3), the argument is quite different and depends on the specific type of the blow-$ADE$ singularity that we consider.

\section{Proof of Theorem \ref{mt1}}

We start with the following two important observations, which are already proved in \cite{EO} (see Lemmas 5.2, 5.3 and the end of \S 2 there).

\begin{lemma}[see \cite{EO}]\label{ml2}
For any $0\leq s\leq 1$, the initial polynomial $f_{s,d}$ of $f_s$ is reduced. In particular, the projective curve $C(f_{s,d})$ defined by $f_{s,d}$ has only isolated singularities. Moreover, the total Milnor number $\mu^{\text{\tiny \emph{tot}}}(C(f_{s,d}))$ of $C(f_{s,d})$, the local embedded topological types of the singularities of $C(f_{s,d})$, and the (global)  topological type of the pair $(\mathbb{P}^2,C(f_{s,d}))$ do not depend on $s$.
\end{lemma}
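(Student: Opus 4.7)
The plan is to deduce each of the four claims from a different aspect of the $\mu^*$-constant hypothesis, appealing to Teissier's Whitney equisingularity theorem where a direct numerical comparison is unavailable.

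The first two conclusions come from the low-dimensional components of $\mu^*$. Since $\mu^{(1)}(f_s)$ is the Milnor number of $f_s$ restricted to a generic line through $\mathbf{0}$, which equals $d_s - 1$ where $d_s$ is the order of $f_s$ at $\mathbf{0}$, the constancy of $\mu^{(1)}$ gives $d_s \equiv d$. For the reducedness, I would use the equivalence
\[
f_{s,d}\text{ is reduced} \iff \mu^{(2)}(f_s) = (d-1)^2.
\]
Indeed, if $f_{s,d}$ is reduced, then for a generic $2$-plane $H$ through $\mathbf{0}$ the projective curve $C(f_{s,d})$ meets the line $\mathbb{P}(H)$ in $d$ distinct points, so $f_s|_H$ is an ordinary $d$-fold plane curve point with Milnor number $(d-1)^2$; conversely, combining the Milnor formula $\mu = 2\delta-r+1$ with the bounds $\delta \geq d(d-1)/2$ and $r \leq d$ for a reduced plane curve germ of multiplicity $d$ shows that $(d-1)^2$ is the minimum possible Milnor number and is attained exactly when the tangent cone is reduced with $d$ distinct linear factors. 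Since $f_0 = f$ has reduced $f_d$ by hypothesis, $\mu^{(2)}(f) = (d-1)^2$, and the $\mu^{(2)}$-constancy then yields reducedness of $f_{s,d}$ for every $s$; in particular $C(f_{s,d})$ has only isolated singularities.

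For the three constancies --- of $\mu^{\text{tot}}(C(f_{s,d}))$, of the local embedded topological types, and of the global topological type of $(\mathbb{P}^2, C(f_{s,d}))$ --- I would invoke Teissier's theorem asserting that $\mu^*$-constancy of a piecewise analytic family of isolated surface singularities in $\mathbb{C}^3$ implies Whitney equisingularity of the total space along the constant singular section $\{\mathbf{0}\}\times[0,1]$. After the ordinary point blowing-up of Section~\ref{sect-def}, Whitney regularity at $\mathbf{0}$ propagates onto the exceptional divisor $E$ and translates into an ambient topological triviality of the pairs $(\tilde V(f_s), E)$; restricting to $E \simeq \mathbb{P}^2$ this gives a topological triviality of the plane curve pairs $(\mathbb{P}^2, C(f_{s,d}))$, which simultaneously delivers the three remaining statements.

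The main obstacle lies in this last step, because no clean numerical formula expresses $\mu^{\text{tot}}(C(f_{s,d}))$ in terms of $\mu(f_s)$ and $d$ for a general surface singularity --- such a formula is only available in special situations such as superisolated or L\^e--Yomdin singularities --- so one cannot conclude by a pure semicontinuity argument. The transfer of Whitney equisingularity from the surface family down to the exceptional divisor via the blowing-up is therefore the technical heart of the argument, and is carried out in \cite{EO}.
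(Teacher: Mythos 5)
The paper does not give its own proof of this lemma --- it simply imports it from \cite{EO} with the pointer ``see Lemmas 5.2, 5.3 and the end of \S 2 there'' --- so there is no internal argument to compare your proposal against. Evaluating it on its own merits:

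Your treatment of the reducedness assertion is correct and self-contained. The constancy of $\mu^{(1)}$ gives constancy of the order $d$, and the characterization ``$f_{s,d}$ reduced $\iff \mu^{(2)}(f_s)=(d-1)^2$'' is established cleanly: for a generic $2$-plane $H$, $f_s|_H$ is a reduced plane curve germ of multiplicity $d$, the formula $\mu=2\delta-r+1$ together with $\delta\geq\binom{d}{2}$ and $r\leq d$ forces $\mu(f_s|_H)\geq(d-1)^2$ with equality exactly when the tangent cone $f_{s,d}|_H$ splits into $d$ distinct lines, and this happens for generic $H$ precisely when $f_{s,d}$ itself is reduced. Combined with $\mu^{(2)}$-constancy and $f_{0,d}$ reduced, this yields reducedness for every $s$, and the ``only isolated singularities'' consequence is immediate.

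For the three constancy statements the situation is less satisfactory. You correctly observe that one cannot argue by semicontinuity alone, since no universal formula expresses $\mu^{\text{tot}}(C(f_{s,d}))$ in terms of $\mu(f_s)$ and $d$ outside the L\^e--Yomdin setting. But the step you propose in its place --- that Whitney equisingularity of the family $\{V(f_s)\}$ along $\{\mathbf{0}\}\times[0,1]$ ``propagates onto the exceptional divisor'' and gives ambient topological triviality of the pairs $(\mathbb{P}^2, C(f_{s,d}))$ --- is precisely the nontrivial assertion, and you state it without any justification. It is not a formal consequence of Whitney regularity that the strict transform and exceptional divisor of a single point blow-up inherit Whitney regularity along the new ($\mathbb{P}^2$-bundle) stratum, nor that a topological trivialization of the germ pairs $(\mathbb{C}^3,V(f_s))$ descends to a trivialization of the projectivized tangent cones; this kind of compatibility between equisingularity conditions and monoidal transformations (in the spirit of L\^e--Teissier's work on conditions (w) and (b) and polar multiplicities) is exactly the technical content that needs to be supplied, and simply pointing to \cite{EO} does not close the gap. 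Moreover, note that even if one secured constancy of $\mu^{\text{tot}}$, constancy of the total Milnor number plus constant degree does not by itself pin down the local embedded topological types of the singular points (e.g. one $A_2$ versus two $A_1$'s have the same $\mu^{\text{tot}}$), so these three conclusions genuinely require separate input. I would regard the last two paragraphs of your proposal as a statement of strategy rather than a proof, and the burden of the argument remains in \cite{EO}.
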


Here, by the ``total Milnor number'' of $C(f_{s,d})$, we mean the sum of the local Milnor numbers at the singular points of $C(f_{s,d})$.

Let $k_0$ (respectively, $\mu$) denote the common number of singular points (respectively, the common total Milnor number) of the curves $C(f_{s,d})$, $0\leq s\leq 1$. 

\begin{lemma}[see \cite{EO}]\label{ml3-0}
For any fixed real number $s$, $0\leq s\leq 1$, let $P_1,\ldots,P_{k_0}$ be the singular points of $C(f_{s,d})$, and for each $P_k$, let $B_\varepsilon (P_k)$ be a ``small'' ball centred at $P_k$.
Then the monodromy zeta-function $\zeta_{f_s,\mathbf{0}}(t)$ of $f_s$ at $\mathbf{0}$ is given by 
\begin{equation*}
\zeta_{f_s,\mathbf{0}}(t)=(1-t^{d})^{-d^2+3d-3+\mu} \
\prod_{k=1}^{k_0}\zeta_{\pi^* f_s\vert_{B_\varepsilon(P_k)}}(t),
\end{equation*}
where $\zeta_{\pi^* f_s\vert_{B_\varepsilon(P_k)}}(t)$ is the zeta-function of the fibration $\pi^* f_s\vert_{B_\varepsilon(P_k)}$.
\end{lemma}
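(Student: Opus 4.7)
The plan is to apply an A'Campo-type stratified decomposition of the monodromy zeta-function adapted to the blow-up $\pi$. First I would stratify the exceptional divisor $E\simeq \mathbb{P}^2$ into three locally closed strata: $E_0:=E\setminus C(f_{s,d})$ (the open complement of the strict transform on $E$), $E_1:=C(f_{s,d})\setminus\{P_1,\ldots,P_{k_0}\}$ (the smooth locus of the curve), and the zero-dimensional stratum $\{P_1,\ldots,P_{k_0}\}$. Because $\pi$ is proper with $\pi^{-1}(\mathbf{0})=E$, the proper base change theorem applied to the nearby cycles $R\psi_{f_s}$ (combined with Euler-characteristic additivity of zeta-functions along a constructible stratification) yields a ``Fubini-type'' identity $\zeta_{f_s,\mathbf{0}}(t)=\prod_{S}\zeta_{\pi^{*}f_s,Q_S}(t)^{\chi(S)}$, where $S$ runs over the strata and $Q_S$ is any base-point of $S$.

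Next I would compute the local zeta-function on each stratum. At a point $Q\in E_0$, the pull-back $\pi^{*}f_s$ equals $y_1^{d}\cdot u$ for a local unit $u$, so the local Milnor fibre is $d$ disjoint copies of a polydisc permuted cyclically, giving a factor of $(1-t^{d})^{\pm 1}$ (the sign depending on the convention adopted for the zeta-function). At a point of $E_1$, the divisor $E\cup \tilde V(f_s)$ is normal-crossing with multiplicities $(d,1)$, and a direct computation on the local Milnor fibration (homotopically $S^1$, with monodromy acting trivially on cohomology) shows that the local zeta is trivial. At each $P_k$, the local zeta is by definition $\zeta_{\pi^{*}f_s\vert_{B_\varepsilon(P_k)}}(t)$.

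The remaining global input is $\chi(E_0)$. A smoothing argument gives $\chi(C(f_{s,d}))=\chi(C')+\mu$, where $C'\subset\mathbb{P}^2$ is a smooth plane curve of degree $d$: a local smoothing inside $B_\varepsilon(P_k)$ replaces the contractible cone $C(f_{s,d})\cap B_\varepsilon(P_k)$ (Euler characteristic $1$) by the local Milnor fibre, a bouquet of $\mu_k$ circles with Euler characteristic $1-\mu_k$, so the global effect is a decrease of $\mu$. Since $\chi(C')=2-(d-1)(d-2)=-d^{2}+3d$, we obtain $\chi(E_0)=3-\chi(C(f_{s,d}))=d^{2}-3d+3-\mu$. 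Substituting into the stratified product, the $E_1$-factor collapses to $1$ and the remaining terms assemble into the claimed formula, with the exponent $-d^{2}+3d-3+\mu$ produced by the convention-matched sign at $E_0$.

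The hard part is the rigorous justification of the stratified product formula, since it rests on the proper base change theorem for the nearby-cycle complex together with multiplicativity of zeta-functions over a constructible stratification; concomitantly one must pin down the sign convention for $\zeta$ so that the generic $E_0$-contribution really is $(1-t^{d})^{-1}$ (rather than $1-t^{d}$), thereby producing the negative exponent in the statement. The Euler-characteristic count via smoothing and the two normal-crossings local computations are routine.
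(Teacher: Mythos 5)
Your proposal is correct: the stratified A'Campo--Oka decomposition of $E\simeq\mathbb{P}^2$ into $E_0=E\setminus C(f_{s,d})$, $E_1=C(f_{s,d})\setminus\{P_1,\dots,P_{k_0}\}$, and the points $P_k$, together with the Euler-characteristic count $\chi(E_0)=3-\chi(C(f_{s,d}))=d^2-3d+3-\mu$ (via local smoothings of $C(f_{s,d})$), the trivial contribution of the $(d,1)$ normal-crossing stratum $E_1$, and the sign convention making $\zeta_{y_1^d}(t)=(1-t^d)^{-1}$, assembles exactly into the stated formula. The present paper does not re-prove this lemma but quotes it from \cite{EO}; since Appendix~\ref{AppA1} of the paper takes A'Campo's Th\'eor\`eme~3 and Oka's \cite[Chapter~I, Theorem~(5.2)]{O1} as its zeta-function machinery, your route is evidently the same as that of \cite{EO} \textemdash\ in particular, the ``hard part'' you flag (the stratified product formula) is precisely that cited theorem and only needs to be invoked, while the one detail you elide (that the strict transform is smooth and transverse to $E$ along $E_1$) is supplied by Lemma~\ref{38}.
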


Note that Lemma \ref{ml2} implies the next statement which describes the singularities of $\tilde f_s$.

\begin{lemma}\label{38}
The strict transform $V(\tilde f_s)$ of $V(f_s)$ has only isolated singularities.
\end{lemma}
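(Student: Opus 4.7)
The plan is to show that the singular locus of $V(\tilde f_s)$ is finite by localizing on the exceptional divisor $E = \pi^{-1}(\mathbf{0}) \simeq \mathbb{P}^2$ and invoking Lemma \ref{ml2}.

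First I would observe that $f_s$ itself has an isolated singularity at $\mathbf{0}$: this is immediate from the $\mu^*$-constant hypothesis, since the Milnor number $\mu^{(3)}(f_s)$ is finite and constant along the family. Because $\pi$ is an isomorphism outside $E$, the restriction of $V(\tilde f_s)$ to $X \setminus E$ is biholomorphic to $V(f_s) \setminus \{\mathbf{0}\}$ and is therefore smooth. Consequently, any singular point of $V(\tilde f_s)$ must lie on $E$.

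Next I would analyze the picture chart by chart. In the chart with coordinates $\mathbf{y} = (y_1,y_2,y_3)$, formulas \eqref{pull-back} and \eqref{ftilda} give
\begin{equation*}
\tilde f_s(\mathbf{y}) \;=\; f_{s,d}(1,y_2,y_3) \;+\; y_1 \, g_s(\mathbf{y})
\end{equation*}
for some analytic function $g_s$. Thus $E \cap U_1 = \{y_1=0\}$ and $V(\tilde f_s) \cap E \cap U_1$ is exactly the trace of $C(f_{s,d})$ on the chart $U_1 \subset \mathbb{P}^2$. For a point $P=(0,y_2^0,y_3^0)$ at which $V(\tilde f_s)$ is singular, the vanishing of $\partial \tilde f_s/\partial y_2$ and $\partial \tilde f_s/\partial y_3$ at $P$ forces $(y_2^0,y_3^0)$ to be a singular point of the affine curve $\{f_{s,d}(1,y_2,y_3)=0\}$. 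By Lemma \ref{ml2}, there are only finitely many such points.

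The same reasoning applies verbatim to the two remaining standard charts of $X$, which together with $U_1$ cover the exceptional $\mathbb{P}^2$. Patching the three local conclusions, one sees that the singular locus of $V(\tilde f_s)$ is a finite subset of $E$, i.e., consists of isolated points. There is no real obstacle in this argument; the statement is a direct corollary of Lemma \ref{ml2} combined with the factorization of $\tilde f_s$ above, the only care required being to cover all three affine charts of $E \simeq \mathbb{P}^2$.
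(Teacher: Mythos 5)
Your proof is correct and follows essentially the same route as the paper: localize on the exceptional divisor, identify $\mathrm{Sing}(V(\tilde f_s))\cap E$ with (a subset of) $\mathrm{Sing}(C(f_{s,d}))$ via the partial derivatives of $\tilde f_s = f_{s,d}(1,y_2,y_3) + y_1 g_s$, and invoke Lemma~\ref{ml2}. The only difference is that you spell out the partial-derivative computation that the paper summarizes as ``we easily check,'' and you note explicitly that the three affine charts of $\mathbb{P}^2$ must be covered; both are harmless elaborations of the same argument.
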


\begin{proof}
As $f_s$ has an isolated singularity at $\mathbf{0}$, in a sufficiently small neighbourhood of the exceptional divisor $E=\{y_1=0\}$, the singular points of $V(\tilde f_s)$ necessarily lie on $E$. Besides, we easily check that the singular points of $V(\tilde f_s)$ on $E$ coincide with those of $f_{s,d}(1,y_2,y_3)=0$ on $E$. As the latter are nothing but the singular points of $C(f_{s,d})$, Lemma \ref{38} follows from Lemma \ref{ml2}.
\end{proof}

Now we consider each case (1), (2) and (3) of the theorem separately.

\subsection{Proof of Theorem \ref{mt1} -- case (1)}\label{sect31}
Pick any $s$, $0\leq s\leq 1$. The first key observation is the following lemma.

\begin{lemma}\label{ml1}
For any singular point $P_k \in\text{\emph{Sing}}(C(f_{s,d}))$, there exist admissible coordinates $\mathbf{x}=(x_1,x_2,x_3)$ near $P_k$ in which the Newton principal part of $\pi^* f_s$ is of the form 
\begin{equation}\label{specialform}
\pi^* f_s=x_1^{d}(x_2^2+x_3^2+c\,x_1^{\ell_k}),
\end{equation}
where $c\in\mathbb{C}^*$ and $\ell_k\in\mathbb{N}^*$.
\end{lemma}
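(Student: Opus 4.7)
The plan is to reduce $\tilde f_s$ near $P_k$ to the separated normal form $x_2^2+x_3^2+\psi(x_1)$ by means of the holomorphic splitting lemma with parameter, and then read off the Newton principal part directly from $\psi$. The two enabling ingredients are the persistence of the local topological type of $C(f_{s,d})$ at $P_k$ from Lemma~\ref{ml2}, which forces $P_k$ to remain of $A_1$-type for every $s\in[0,1]$, and the isolatedness of the singular locus of $V(\tilde f_s)$ from Lemma~\ref{38}, which will exclude the degenerate case $\psi\equiv 0$.

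First I would invoke the two-variable holomorphic Morse lemma, which is available because $A_1$ is the analytic normal form of a non-degenerate plane curve singularity over $\mathbb{C}$, to choose analytic local coordinates $(y_2,y_3)$ centered at $P_k$ in the affine chart $U_1$ such that $f_{s,d}(1,y_2,y_3)=y_2^2+y_3^2$. Setting $x_1:=y_1$, the strict transform takes the form
$$\tilde f_s(x_1,y_2,y_3)=y_2^2+y_3^2+x_1\,\phi(x_1,y_2,y_3)$$
for some analytic germ $\phi$. Since the $(y_2,y_3)$-Hessian of $\tilde f_s$ at the origin coincides with that of $y_2^2+y_3^2$ and is therefore non-degenerate, the parametrized holomorphic Morse (splitting) lemma applies: after locating the $x_1$-family of critical points of $(y_2,y_3)\mapsto\tilde f_s(x_1,\cdot)$ via the implicit function theorem, shifting it to the origin and diagonalizing, one obtains a biholomorphic change of variables $(y_2,y_3)\mapsto(x_2,x_3)$ depending analytically on $x_1$ such that
$$\tilde f_s=x_2^2+x_3^2+\psi(x_1)$$
for some analytic germ $\psi$ of $x_1$ alone. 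Because $x_1=y_1$ is preserved throughout, the chart $(x_1,x_2,x_3)$ is admissible in the sense of Definition~\ref{def-bNDsing}.

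It then remains to analyze $\psi$. Evaluating at the origin yields $\psi(0)=\tilde f_s(P_k)=f_{s,d}(1,P_k)=0$, so $\psi$ vanishes at $x_1=0$. If $\psi$ were identically zero, then $\tilde f_s$ would equal $x_2^2+x_3^2$ on a full neighbourhood of $P_k$ and $V(\tilde f_s)$ would have the line $\{x_2=x_3=0\}$ as a non-isolated component of its singular locus, contradicting Lemma~\ref{38}. Hence $\psi(x_1)=c\,x_1^{\ell_k}+\cdots$ with $c\in\mathbb{C}^*$ and $\ell_k\in\mathbb{N}^*$, so the Newton principal part of $\tilde f_s$ in $(x_1,x_2,x_3)$ is exactly $x_2^2+x_3^2+c\,x_1^{\ell_k}$, and multiplying by $x_1^d$ yields the claimed form~\eqref{specialform}. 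The main delicate point I anticipate is the invocation of the parametrized holomorphic Morse lemma in the second step, and in particular verifying that the resulting coordinate change genuinely yields an admissible chart even though it depends non-trivially on $x_1$.
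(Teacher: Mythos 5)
Your proof is correct, and it takes a genuinely different route from the paper's. The paper first isolates, via a contradiction with the isolatedness of the singular locus of $\tilde f_s$, a minimal index $j_0$ for which a monomial $x_1^{j_0}$, $x_1^{j_0}x_2$ or $x_1^{j_0}x_3$ occurs; when the pure power $x_1^{j_0}$ is absent, it applies the explicit shear $x_2\mapsto x_2+(a_{j_0}/2)x_1^{j_0}$, $x_3\mapsto x_3+(b_{j_0}/2)x_1^{j_0}$ to kill the $x_1^{j_0}x_2$ and $x_1^{j_0}x_3$ terms, iterates, and proves termination of the iteration by observing that otherwise the Milnor number $\mu_{P_k}(\tilde f_s)$ would be unbounded (this is their Claim \ref{ass35}, proved by comparison with the Newton non-degenerate deformation $\tilde f_s+\tau x_1^{2j_{\mbox{\tiny min}}}$ and semicontinuity of $\mu$). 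You replace this whole iteration-plus-termination argument by a single invocation of the parametrized holomorphic Morse/splitting lemma, which legitimately yields $\tilde f_s=x_2^2+x_3^2+\psi(x_1)$ in one stroke because the $(y_2,y_3)$-Hessian at $P_k$ is non-degenerate; the exclusion of $\psi\equiv 0$ via Lemma \ref{38} plays the same role as the paper's Claim \ref{claimjm}. Two small remarks: the worry you flag about admissibility is unfounded, since the paper's own changes \eqref{lcc} already depend on $x_1$, so ``$(x_2,x_3)$ is an analytic coordinate change of $(y_2,y_3)$'' is to be read as allowing fibered changes over the $x_1$-line; and the resulting normal form immediately exhibits the Newton principal part, since the remaining terms of $\psi$ are of the form $x_1^j$ with $j>\ell_k$ and hence lie strictly above the Newton boundary. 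Your argument is shorter and avoids the auxiliary Milnor-number estimate, at the cost of invoking the (standard, but less elementary) parametrized Morse lemma; the paper's version is more hands-on and self-contained.
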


\begin{proof}
As $P_k$ is a node (see Lemma \ref{ml2}), there are admissible coordinates $\mathbf{x}=(x_1,x_2,x_3)$ near $P_k$ in which $\pi^* f_s$ is of the form 
\begin{equation}\label{gen-not-1}
\pi^* f_s=x_1^{d}\tilde f_s=x_1^{d}\bigg(h(x_2,x_3)+\sum_{j\geq 1}x_1^j g_j(x_2,x_3)\bigg)
\end{equation}
where $h(x_2,x_3)=x_2^2+x_3^2$ is the defining polynomial of $C(f_{s,d})$ near $P_k$ (see \eqref{pull-back} and \eqref{ftilda}). Here, the $g_j$'s are not necessarily homogeneous.

\begin{claim}\label{claimjm}
There exists $j\geq 1$ such that the term $x_1^j g_j(x_2,x_3)$ in \eqref{gen-not-1} contains, up to coefficients,  at least one monomial of the form $x_1^j$, $x_1^jx_2$ or $x_1^jx_3$. 
\end{claim}

\begin{proof}
We argue by contradiction. Suppose that the assertion is not true. Then, for any $j\geq 1$, all the monomials of $x_1^j g_j(x_2,x_3)$ are of the form $x_1^{j} x_2^{j_2}x_3^{j_3}$ with $j_2+ j_3\geq 2$. In particular, this implies that the critical locus of $\tilde f_s$ contains the line $x_2=x_3=0$, and hence, that it is $1$-dimensional. But this, in turn, implies that the critical locus of $f_s$ is $1$-dimensional too, which is a contradiction.
\end{proof}

Let $j_0$ denote the smallest integer $j$ satisfying Claim \ref{claimjm}. If $x_1^{j_0} g_{j_0}(x_2,x_3)$ contains a term of the form $c\,x_1^{j_0}$ with $c\not=0$, then we take $\ell_k={j_0}$ and the lemma is proved. Indeed, in this case, since $P_k$ is a node, all (the vertices corresponding to) the other monomials of $\pi^* f_s$ are strictly above the Newton boundary of $x_1^{d}(x_2^2+x_3^2+c\,x_1^{j_0})$. 

Now, if $x_1^{j_0} g_{j_0}(x_2,x_3)$ does not contain the term $x_1^{j_0}$ with a non-zero coefficient, then, by Claim \ref{claimjm}, it contains the monomials 
\begin{equation*}
a_{j_0}x_1^{j_0}x_2
\quad\mbox{and}\quad
b_{j_0}x_1^{j_0}x_3,
\end{equation*}
 where, at least, either $a_{j_0}$ or $b_{j_0}$ is non-zero.  To prove the Lemma \ref{ml1}, we must eliminate these monomials. For this purpose, we apply the following procedure. We consider the coordinates change $\mathbf{x}=(x_1,x_2,x_3)\mapsto\mathbf{x}'=(x_1,x'_2,x'_3)$ defined by 
\begin{equation}\label{lcc}
x'_2:=x_2+(a_{j_0}/2) x_1^{j_0}
\quad\mbox{and}\quad
x'_3:=x_3+(b_{j_0}/2) x_1^{j_0}.
\end{equation}
In these new coordinates, the pull-back $\pi^*f_s$ is written as
\begin{equation*}
\pi^*f_s=x_1^{d}({x'_2}^2+{x'_3}^2+c_{j_0}\, x_1^{2j_0}+R(\mathbf{x}')),
\end{equation*}
where $c_{j_0}$ is a (possibly zero) constant and $R(\mathbf{x}')$ is an analytic function. Note that $R(\mathbf{x}')$ may contain monomials of the form $x_1^j$, $x_1^jx'_2$ and $x_1^jx'_3$ only for $j>j_0$. 
If $c_{j_0}\not=0$ and $R(\mathbf{x}')$ contains only monomials which are strictly above the Newton boundary of 
\begin{equation*}
{x'_2}^2+{x'_3}^2+c_{j_0}\, x_1^{2j_0},
\end{equation*}
then Lemma \ref{ml1} is proved. Otherwise, we repeat the procedure with respect to the new coordinates $\mathbf{x}'$, that is, we apply a new change of coordinates $\mathbf{x}'\mapsto \mathbf{x}''$ (similar to the change $\mathbf{x}\mapsto \mathbf{x}'$ defined in \eqref{lcc}) in order to eliminate the new unwanted terms.

\begin{claim}\label{claimstop}
After a finite number of steps, this procedure stops, that is, at some point, we end up with admissible coordinates in which the Newton principal part of $\pi^* f_s$ at $P_k$ takes the form \eqref{specialform}.
\end{claim}

\begin{proof}
Claim \ref{claimstop} is clear if a monomial of the form $c\,x_1^j$ with $c\not=0$ and $j_0<j\leq 2j_0$ appears in $\tilde f$. If $\tilde f$ does not include such a monomial, then, by Claim \ref{claimjm}, there exists $j_1$ such that $\tilde f$ contains at least one monomial of the form $x_1^{j_1}$, $x_1^{j_1} x'_2$ or $x_1^{j_1} x'_3$ with a non-zero coefficient and does not include any term of the form $c\,x_1^j$, $c\not=0$, for $j\leq 2j_0$. Let $j_{\mbox{\tiny min}}$ be the smallest integer among all such $j_1$'s. If $x_1^{j_{\mbox{\tiny min}}}$ appears with a non-zero coefficient, then we conclude as above taking $\ell_k=j_{\mbox{\tiny min}}$. Otherwise, Claim \ref{claimstop} follows immediately from the next statement.

\begin{claim}\label{ass35}
If $\tilde f$ does not include the term $x_1^{j_{\mbox{\emph{\tiny min}}}}$ with a non-zero coefficient, then the Milnor number, $\mu_{P_k}(\tilde f_s)$, of $\tilde f_s$ at $P_k$ satisfies the following inequality:
\begin{equation*}
\mu_{P_k}(\tilde f_s)\geq 2j_{\mbox{\emph{\tiny min}}}-1.
\end{equation*}
\end{claim}

That Claim \ref{claimstop} follows from Claim \ref{ass35} is clear. Indeed, if $j_{\mbox{\tiny min}}$ becomes arbitrarily large, so does $\mu_{P_k}(\tilde f_s)$, which is a contradiction.
\end{proof}

\begin{proof}[Proof of Claim \ref{ass35}]
Consider the deformation family
\begin{equation*}
\tilde f_{s,\tau}:=\tilde f_s+\tau x_1^{2j_{\mbox{\tiny min}}}
\end{equation*}
for $\tau\geq 0$ small enough. Clearly $\tilde f_{s,\tau}$, $\tau\not=0$, is Newton non-degenerate. Thus, by \cite[Th\'eor\`eme 1.10]{K}, its Milnor number at $P_k$ is given by $\mu_{P_k}(\tilde f_{s,\tau})=2j_{\mbox{\tiny min}}-1$. It then follows from the upper-semicontinuity of the Milnor number that 
\begin{equation*}
\mu_{P_k}(\tilde f_s)\geq \mu_{P_k}(\tilde f_{s,\tau})=2j_{\mbox{\tiny min}}-1.\qedhere
\end{equation*}
\end{proof}
This completes the proof of Lemma \ref{ml1}.
\end{proof}

Together with Lemma \ref{ml1}, the second crucial observation to prove Theorem \ref{mt1} (in case~(1)) is the following lemma.

\begin{lemma}\label{ml3}
For all $1\leq k\leq k_0$, the integer $\ell_k$ defined in Lemma \ref{ml1} is equal to $m$ and the monodromy zeta-function $\zeta_{f_s,\mathbf{0}}(t)$ is given by
\begin{equation*}
\zeta_{f_s,\mathbf{0}}(t)=(1-t^{d})^{-d^2+3d-3+\mu} (1-t^{d+m})^{-\mu}.
\end{equation*}
\end{lemma}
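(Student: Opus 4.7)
The plan is to pair Lemma \ref{ml1} with Varchenko's formula (or A'Campo's formula on a toric resolution) to compute each local zeta-function at $P_k$, then plug into Lemma \ref{ml3-0}, and finally compare to the value at $s=0$ using the topological invariance of the monodromy zeta-function in a $\mu^*$-constant family (Teissier).

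First, I would compute the local zeta-function $\zeta_{\pi^*f_s\vert_{B_\varepsilon(P_k)}}(t)$. By Lemma \ref{ml1}, in admissible coordinates the Newton principal part of $\pi^*f_s$ at $P_k$ is
\[
g:=x_1^d(x_2^2+x_3^2+cx_1^{\ell_k})=cx_1^{d+\ell_k}+x_1^d x_2^2+x_1^d x_3^2,
\]
whose Newton boundary is the triangle with vertices $v_1=(d+\ell_k,0,0)$, $v_2=(d,2,0)$, $v_3=(d,0,2)$, together with its three edges and three vertices. I would check case by case that the face polynomial of each of these faces has no critical point in $(\mathbb{C}^*)^3$, so $g$ is Newton non-degenerate. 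Applying Varchenko's formula, or equivalently A'Campo's formula on the toric resolution obtained from a regular subdivision of the dual fan of $g$, a direct combinatorial computation gives
\[
\zeta_{\pi^* f_s\vert_{B_\varepsilon(P_k)}}(t)=(1-t^{d+\ell_k})^{-1}.
\]

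Inserting this into Lemma \ref{ml3-0} yields
\[
\zeta_{f_s,\mathbf{0}}(t)=(1-t^d)^{-d^2+3d-3+\mu}\prod_{k=1}^{k_0}(1-t^{d+\ell_k^{(s)}})^{-1}
\]
for every $0\leq s\leq 1$ (I write $\ell_k^{(s)}$ to emphasise the dependence on the parameter). By Teissier's theorem \cite[Th\'eor\`eme~3.9]{Teissier2}, the $\mu^*$-constant piecewise complex-analytic family $\{f_s\}$ is embedded topologically equisingular, so $\zeta_{f_s,\mathbf{0}}(t)$ is independent of $s$. At $s=0$, $\ell_k^{(0)}=m$ for every $k$ by hypothesis and, as each $P_k$ is a node, $\mu=k_0$; hence
\[
\zeta_{f_s,\mathbf{0}}(t)=(1-t^d)^{-d^2+3d-3+\mu}(1-t^{d+m})^{-\mu}
\]
for every $s$, which is the second conclusion of the lemma. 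Comparing with the previous display gives the polynomial identity $\prod_k(1-t^{d+\ell_k^{(s)}})=(1-t^{d+m})^{k_0}$; decomposing both sides into cyclotomic polynomials $\Phi_\delta(t)$ and comparing multiplicities of each $\Phi_\delta$ forces $(d+m)\mid(d+\ell_k^{(s)})$ for every $k$ (take $\delta=d+m$) and $(d+\ell_k^{(s)})\mid(d+m)$ for every $k$ (take $\delta=d+\ell_k^{(s)}$), so $\ell_k^{(s)}=m$, which is the first conclusion.

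The main obstacle is the first step, the explicit local zeta-function computation. The complication is that $\pi^*f_s$ is \emph{not} an isolated singularity at $P_k$ (the factor $x_1^d$ forces its critical locus to contain the exceptional divisor $\{x_1=0\}$), so one cannot invoke a direct Brieskorn--Pham formula; instead one must use the Newton non-degeneracy of $g$ together with the toric/Varchenko framework to carry out the computation and verify that all contributions collapse to the single factor $(1-t^{d+\ell_k})^{-1}$. Once this clean form is established, the remaining steps are essentially combinatorial bookkeeping.
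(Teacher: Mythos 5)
Your proposal is correct and follows essentially the same route as the paper: combine Lemma~\ref{ml1} (Newton principal part $x_1^d(x_2^2+x_3^2+cx_1^{\ell_k})$, hence Newton non-degeneracy) with Varchenko's theorem to get $\zeta_{\pi^* f_s\vert_{B_\varepsilon(P_k)}}(t)=(1-t^{d+\ell_k})^{-1}$, substitute into Lemma~\ref{ml3-0}, and then invoke Teissier's constancy of the zeta-function together with the known form at $s=0$ (where $k_0=\mu$ and all $\ell_k=m$) to conclude. The only point where you add detail beyond the paper is the cyclotomic-multiplicity argument forcing $\ell_k^{(s)}=m$; the paper simply states that this "follows immediately," and your argument is a valid way of making that step explicit.
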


\begin{proof}
As $f$ has a pure blow-$A_1$ singularity at $\mathbf{0}$, we have $k_0=\mu$. Moreover, by Lemma \ref{ml3-0}, we know that
\begin{equation*}
\zeta_{f_s,\mathbf{0}}(t) = (1-t^d)^{-d^2+3d-3+\mu} \cdot \prod_{k=1}^{\mu} \zeta_{\pi^* f_s\vert_{B_\varepsilon(P_k)}}(t),
\end{equation*}
and since $\pi^* f_s\vert_{B_\varepsilon(P_k)}$ is Newton non-degenerate (see Lemma \ref{ml1}), we have
\begin{equation*}
\zeta_{\pi^* f_s\vert_{B_\varepsilon(P_k)}}(t) = (1-t^{d+\ell_k})^{-1}
\end{equation*}
(see \cite[Theorem (4.1)]{V}).
Altogether,
\begin{equation*}
\zeta_{f_s,\mathbf{0}}(t)=(1-t^{d})^{-d^2+3d-3+\mu} \prod_{k=1}^{\mu}(1-t^{d+\ell_k})^{-1}.
\end{equation*}
Now, since $\zeta_{f_s,\mathbf{0}}(t)=\zeta_{f_0,\mathbf{0}}(t)$ (see \cite[Th\'eor\`eme~3.9]{Teissier2}) and since the blow-order of the pure blow-$A_1$ singularity $f_0\equiv f$ is $m$, it follows immediately that $\ell_k=m$ for all $1\leq k\leq \mu$.
\end{proof}

Now, case (1) of Theorem \ref{mt1} follows immediately from Lemmas \ref{ml1} and \ref{ml3}.

\subsection{Proof of Theorem \ref{mt1} -- case (2)}\label{sect-32}

Again, pick any $s$, $0\leq s\leq 1$. 
By Lemma \ref{ml3-0}, the first zeta-multiplicity factor $\zeta^{(1)}_{f_s,\mathbf{0}}(t)$ of the zeta-function $\zeta_{f_s,\mathbf{0}}(t)$ (see Appendix \ref{AppA1} for the definition) is given by 
\begin{equation*}
\zeta^{(1)}_{f_s,\mathbf{0}}(t)=(1-t^{d})^{-d^2+3d-3+\mu}.
\end{equation*}
As $f_0\equiv f$ has an \emph{even} blow-$A$ singularity of blow-order $2$, each singular point of $C(f_{0,d})$ contributes to the second zeta-multiplicity factor $\zeta^{(2)}_{f_0,\mathbf{0}}(t)$ of $\zeta_{f_0,\mathbf{0}}(t)$ by the term $(1-t^{d+2})^{+1}$ (for the definition of $\zeta^{(2)}_{f_0,\mathbf{0}}(t)$, see Appendix~\ref{AppA1}). This can be easily seen using \cite[Lemma 3.2 and Theorem 3.7]{O2} and \cite[Theorem (4.1)]{V}. So, altogether, we have
\begin{equation}\label{zfdp2}
\zeta^{(2)}_{f_0,\mathbf{0}}(t)=(1-t^{d+2})^{k_0}.
\end{equation} 

Now, take any singular point $P_k\in\mbox{Sing}(C(f_{s,d}))$, $s\not=0$, and consider admissible coordinates $\mathbf{x}=(x_1,x_2,x_3)$ near $P_k$ in which $\pi^* f_s$ is of the form 
\begin{equation}\label{exppbicx}
\pi^* f_s=x_1^{d}\bigg(h(x_2,x_3)+\sum_{j\geq 1}x_1^j g_j(x_2,x_3)\bigg)
\end{equation}
where $h(x_2,x_3)$ is the defining polynomial of $C(f_{s,d})$ near $P_k$. In the present case, $h(x_2,x_3)=x_2^2+x_3^{n+1}$ for some even integer $n$ depending on $P_k$ (see Lemma \ref{ml2}).

\begin{claim}\label{claim310}
The first term, $x_1g_1$, under the sum symbol in \eqref{exppbicx} does not contain the linear term~$x_1$. (As above, by $x_1$ we mean $x_1$ up to a non-zero coefficient.)
\end{claim}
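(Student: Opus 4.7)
The plan is to argue by contradiction. Suppose $g_1$ contains the monomial $c\,x_1$ for some non-zero constant $c$; equivalently, $\tilde f_s$ contains the monomial $c\,x_1$, so that $\pi^*f_s$ contains $c\,x_1^{d+1}$ in a neighbourhood of $P_k$. I would then show that this forces a ``forbidden'' factor at slope $t^{d+1}$ in the monodromy zeta-function, contradicting Teissier's theorem \cite[Th\'eor\`eme~3.9]{Teissier2} combined with \eqref{zfdp2}.

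First, I would examine the Newton polyhedron of $\tilde f_s$ at $P_k$. By Lemma \ref{ml2}, near $P_k$ the defining polynomial of $C(f_{s,d})$ has the form $h(x_2,x_3)=x_2^2+x_3^{n+1}$ with $n$ even, so the Newton polyhedron of $\tilde f_s$ has vertices $(0,2,0)$ and $(0,0,n+1)$. The hypothetical constant term in $g_1$ adds the vertex $(1,0,0)$, and no other monomial of $\tilde f_s$ can dominate it from below, since every term in $\sum_{j\geq 1}x_1^j g_j(x_2,x_3)$ carries $x_1$-power at least $1$ and the only one of $x_1$-power exactly $1$ contributing to the vertex $(1,0,0)$ is our hypothetical $c\,x_1$. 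Thus $(1,0,0)$ is a vertex of the Newton boundary, and the compact $2$-face $\Delta$ spanned by $(1,0,0)$, $(0,2,0)$, $(0,0,n+1)$ has primitive weight vector $(2(n+1),\,n+1,\,2)$ (primitive because $n+1$ is odd).

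Next, I would compute the contribution of this configuration to $\zeta_{\pi^*f_s\vert_{B_\varepsilon(P_k)}}(t)$ using Varchenko's formula \cite[Theorem (4.1)]{V} combined with \cite[Lemma 3.2 and Theorem 3.7]{O2}, exactly as was done to derive \eqref{zfdp2}. Accounting for the exceptional-divisor factor $y_1^d$, the new face $\Delta$ contributes a non-trivial factor of the form $(1-t^{d+1})^{\varepsilon_k}$ with $\varepsilon_k\neq 0$ to $\zeta_{\pi^*f_s\vert_{B_\varepsilon(P_k)}}(t)$. By Lemma \ref{ml3-0}, this factor survives in the global product $\zeta_{f_s,\mathbf{0}}(t)$: the $k_0$ local contributions at distinct singular points cannot cancel the $(1-t^{d+1})$-factor because all exponents $\varepsilon_{k'}$ coming from an analogous triangular face at other $A_{n'}$ points (with $n'$ even) enter with the same sign by Varchenko. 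However, by Teissier's theorem and \eqref{zfdp2}, $\zeta_{f_s,\mathbf{0}}(t)=\zeta_{f_0,\mathbf{0}}(t)$ involves only the slopes $t^d$ and $t^{d+2}$; since $d+1\notin\{d,d+2\}$, this is the desired contradiction.

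The main obstacle is ensuring that Varchenko's formula applies cleanly to yield a genuinely non-zero exponent $\varepsilon_k$ at slope $d+1$. This requires either verifying Newton non-degeneracy of $\tilde f_s$ on the face $\Delta$ (which can be arranged by a preliminary admissible change of coordinates analogous to the procedure in the proof of Lemma \ref{ml1}, with Claim \ref{ass35} bounding the number of iterations), or, failing that, reducing to the non-degenerate case via a small $\mu^*$-constant perturbation that leaves $\zeta_{f_s,\mathbf{0}}(t)$ unchanged. A subsidiary point is the no-cancellation argument across singular points, which rests on the uniform sign structure of Varchenko's contributions from faces of the same combinatorial type.
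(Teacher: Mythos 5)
Your proposal follows essentially the same route as the paper: argue by contradiction, note that a nonzero linear term $c\,x_1$ makes the Newton principal part of $\tilde f_s$ at $P_k$ (and hence of $\pi^*f_s$) Newton non-degenerate with an extra contribution to the zeta-function at $t^{d+1}$, and conclude from Teissier's equisingularity theorem and \eqref{zfdp2} that no such factor can appear. One remark: the ``main obstacle'' you flag at the end is not actually an obstacle. Once the monomial $c\,x_1$ ($c\neq 0$) is present, the Newton principal part is $c\,x_1+x_2^2+x_3^{n+1}$ (every other monomial of $\tilde f_s$ lies strictly above the plane through $(1,0,0)$, $(0,2,0)$, $(0,0,n+1)$), and every face polynomial of this principal part contains the term $c\,x_1$, so its gradient has a nowhere-vanishing first component; non-degeneracy is therefore automatic, and no preliminary coordinate change or perturbation is needed. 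Likewise, because each offending $P_k$ contributes the \emph{same} factor $(1-t^{d+1})^{-1}$, cancellation across singular points cannot occur, so the sign discussion is superfluous.
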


\begin{proof}
We argue by contradiction. If $g$ contains $x_1$, then 
\begin{equation*}
\pi^* f_s=x_1^{d}(h(x_2,x_3)+a\, x_1+\cdots),\ a\not=0,
\end{equation*}
and the latter expression is Newton non-degenerate. (Here, the dots ``$\cdots$'' stand for the higher-order terms.) Then, by \cite[Theorem (4.1)]{V}, we easily check that the zeta-function $\zeta_{\pi^* f_s\vert_{B_\varepsilon(P_k)}}(t)$, and hence $\zeta_{f_s,\mathbf{0}}(t)$, contain the factor $(1-t^{d+1})^{-1}$. On the other hand, $\zeta_{f_0,\mathbf{0}}(t)$ does not contain this factor. This is a contradiction, as the zeta-functions $\zeta_{f_0,\mathbf{0}}(t)$ and $\zeta_{f_s,\mathbf{0}}(t)$ must be identical by \cite[Th\'eor\`eme 3.9]{Teissier2}.
\end{proof}

Let $f_s=\sum_{j\leq 0}f_{s,d+j}$ be the homogeneous decomposition of $f_s$.
By Claim \ref{claim310}, the pull-back $\pi^*(f_{s,d}+f_{s,d+1})=x_1^d(h(x_2,x_3)+ x_1\, g_1(x_2,x_3))$ is of the form
\begin{equation*}
x_1^d\big(x_2^2+x_3^{n+1}+A(x_2,x_3)\, x_1x_2+B(x_3)\, x_1x_3^q\big)
\end{equation*} 
for some integer $1\leq q\leq (n+1)/2$ and some analytic functions $A(x_2,x_3)$ and $B(x_3)$. 
Put 
\begin{equation*}
x_2':=x_2+x_1 A(x_2,x_3)/2.
\end{equation*} 
By the implicit function theorem, in a small neighbourhood of the origin, the variable $x_2$ can be written as $x_2=\phi(x_1,x_2',x_3)$, and in the new coordinates $\mathbf{x}':=(x_1,x_2',x_3)$, the pull-back $\pi^*(f_{s,d}+f_{s,d+1}+f_{s,d+2})$ takes the form
\begin{align}\label{DNPP}
x_1^d({x_2'}^2+x_3^{n+1}+x_1^2 (g_2(\phi(\mathbf{x}'),x_3)-A^2(\phi(\mathbf{x}'),x_3)/4)+B(x_3)\, x_1x_3^q).
\end{align}  
It follows that if the Newton principal part $\mbox{NPP}(\pi^* f_s)$ of $\pi^* f_s$ contains the term $a x_1^2$ for some $a\not=0$, then it is of the form 
\begin{equation}\label{provi}
x_1^d({x_2'}^2+x_3^{n+1}+a\, x_1^2+b\, x_1x_3^q),
\end{equation} 
where $b:=B(0)$. On the other hand, if it does not contain such a term (i.e., if $a=0$), then $\mbox{NPP}(\pi^* f_s)$  is written as
\begin{equation}\label{nppi2}
x_1^d({x_2'}^2+x_3^{n+1}+b\, x_1x_3^q+R(\mathbf{x}')),
\end{equation} 
where $R(\mathbf{x}')$ is some polynomial of order $>d+2$ (the exact form of $R(\mathbf{x}')$ does not matter). 

\begin{claim}\label{(i)}
If $a\not=0$ and $b=0$, then $P_k$ contributes to the second zeta-multiplicity factor $\zeta^{(2)}_{f_s,\mathbf{0}}(t)$ by the term $(1-t^{d+2})^{+1}$; moreover, we have 
\begin{equation*}
\deg \zeta_{\pi^* f_s\vert_{B_\varepsilon(P_{s,k})}}(t)=\deg \zeta_{\pi^* f_0\vert_{B_\varepsilon(P_{0,k})}}(t).
\end{equation*}
\end{claim}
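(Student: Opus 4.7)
The plan is to leverage the fact that setting $b=0$ removes the ``off-diagonal'' $x_1 x_3^q$ monomial from \eqref{provi}, so that the Newton principal part of $\pi^* f_s$ at $P_k$ becomes $x_1^d({x_2'}^2 + x_3^{n+1} + a\, x_1^2)$ --- structurally the same Brieskorn--Pham shape as the Newton principal part $x_1^d(x_2^2 + x_3^{n+1} + c\, x_1^2)$ of $\pi^* f_0$ at the corresponding point $P_{0,k}$ (guaranteed by Definition \ref{def-bNDsing} applied to the even blow-$A$ singularity $f_0$ of blow-order $m=2$). In particular, the two Newton polyhedra coincide vertex-for-vertex as $\{(d+2,0,0),\,(d,2,0),\,(d,0,n+1)\}$.

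First I would check that the Newton principal part of $\pi^* f_s$ at $P_k$ is Newton non-degenerate. The compact faces of the polyhedron above are the triangular $2$-face with all three vertices and its three edges, and on each of them the restriction of $\pi^* f_s$ reduces to a Brieskorn--Pham-type sum with non-zero coefficients; such a sum has no critical point on the torus $(\mathbb{C}^*)^3$. Non-degeneracy therefore holds.

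Next, I would apply Varchenko's formula \cite[Theorem (4.1)]{V}: for a Newton non-degenerate function the local monodromy zeta function is determined by the Newton polyhedron alone. Since $\pi^* f_s$ at $P_k$ and $\pi^* f_0$ at $P_{0,k}$ have identical Newton polyhedra and both are non-degenerate (the latter automatically, being already equal to its Newton principal part by Definition~\ref{def-bNDsing}), I obtain
\begin{equation*}
\zeta_{\pi^* f_s\vert_{B_\varepsilon(P_{s,k})}}(t) = \zeta_{\pi^* f_0\vert_{B_\varepsilon(P_{0,k})}}(t),
\end{equation*}
which immediately gives the degree equality stated in the claim. For the contribution to $\zeta^{(2)}_{f_s,\mathbf{0}}(t)$, I would extract the $(1-t^{d+2})$-part from $\prod_k \zeta_{\pi^* f_s\vert_{B_\varepsilon(P_{s,k})}}(t)$ via Lemma \ref{ml3-0} and match it against \eqref{zfdp2}, producing the factor $(1-t^{d+2})^{+1}$ per singular point.

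The main potential obstacle is reconciling the concise expression \eqref{provi} with the genuine higher-order tails of $B(x_3)\, x_1 x_3^q$ (which, when $b=B(0)=0$, still carry monomials $x_1 x_3^{q+j}$ for $j\geq 1$) and of $x_1^2(g_2(\phi(\mathbf{x}'),x_3)-A^2(\phi(\mathbf{x}'),x_3)/4)$: one must verify that these tails lie strictly above the claimed Newton polyhedron, or else absorb them by a further coordinate substitution analogous to \eqref{lcc}. Once this is settled, the Varchenko-based argument above completes the proof.
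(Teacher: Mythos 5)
Your argument is essentially the paper's own one-line proof written out in full: the paper simply observes that when $a\neq 0$ and $b=0$ the Newton principal part of $\pi^*f_s$ at $P_k$ has exactly the same form $x_1^d(x_2^2+x_3^{n+1}+a\,x_1^2)$ as that of $\pi^*f_0$, whence the two conclusions follow via Newton non-degeneracy and Varchenko's formula, as you spell out. The ``potential obstacle'' you flag in your last paragraph is already disposed of by the paper: equation \eqref{provi} is stated as the Newton principal part of $\pi^*f_s$ (with $q$ minimal so that $b=B(0)$ is the coefficient actually on the boundary), so the higher-order tails from $B(x_3)x_1x_3^q$ and $x_1^2(g_2-A^2/4)$ lie strictly above the Newton boundary by construction and no further substitution is needed.
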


By abuse of language, in this claim as well as in the following ones, by $a\not=0$ (respectively, $a=0$), we mean that the Newton principal part of $\pi^* f_s$ \emph{contains} (respectively, \emph{does not contain}) the quadratic term $x_1^2$ with a non-zero coefficient.

\begin{proof}
Claim \ref{(i)} is clear as if $a\not=0$ and $b=0$, then the Newton principal part of $\pi^* f_s$ is exactly of the same form as that of $\pi^* f_0$.
\end{proof}

\begin{claim}\label{(ii)}
If $a\not=0$ and $b\not=0$, then $P_k$ also contributes to the second zeta-multiplicity factor $\zeta^{(2)}_{f_s,\mathbf{0}}(t)$ by the term $(1-t^{d+2})^{+1}$; on the other hand, this time, we have 
\begin{equation*}
\deg \zeta_{\pi^* f_s\vert_{B_\varepsilon(P_{s,k})}}(t)>\deg \zeta_{\pi^* f_0\vert_{B_\varepsilon(P_{0,k})}}(t).
\end{equation*}
\end{claim}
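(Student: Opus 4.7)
My plan is to apply Varchenko's formula \cite[Theorem (4.1)]{V} to the Newton principal part of $\pi^*f_s$ at $P_{s,k}$, as was done for the $b=0$ case in the proof of Claim~\ref{(i)}, but now taking into account the additional vertex introduced by the term $b\,x_1x_3^q$. From \eqref{provi},
\begin{equation*}
\mathrm{NPP}(\pi^*f_s)=x_1^d\bigl({x'_2}^2+x_3^{n+1}+a\,x_1^2+b\,x_1x_3^q\bigr),
\end{equation*}
and since $n$ is even the constraint $1\leq q\leq(n+1)/2$ forces $q<(n+1)/2$, so $(d+1,0,q)$ lies strictly below the affine plane through $(d+2,0,0)$, $(d,2,0)$, $(d,0,n+1)$. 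Consequently the single compact $2$-face $\Delta_0$ of the $b=0$ Newton polyhedron is replaced, in the $b\neq0$ case, by two compact $2$-faces $\Delta_1$ and $\Delta_2$ meeting along the new compact edge from $(d,2,0)$ to $(d+1,0,q)$. Newton non-degeneracy of the new principal part is straightforward to check face by face since $a,b\neq0$.

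To prove the first assertion, I would argue that the mechanism producing the factor $(1-t^{d+2})^{+1}$ at $P_{s,k}$ is insensitive to the introduction of $(d+1,0,q)$. In the $b=0$ case, this factor was extracted (via \cite[Lemma 3.2 and Theorem 3.7]{O2} combined with \cite[Theorem (4.1)]{V}) from the combinatorial pattern built out of the $A_n$-vertex at $P_{s,k}$ together with the quadratic monomial $a\,x_1^2$. Since these data are unchanged by adding $b\,x_1x_3^q$ — the new vertex $(d+1,0,q)$ affects only the face structure opposite the relevant combinatorics — the same extraction yields the contribution $(1-t^{d+2})^{+1}$ for $f_s$ as well.

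For the strict degree inequality, I would compare the two zeta-functions face by face: they coincide except on the faces created by $(d+1,0,q)$, so their ratio is a product of factors $(1-t^{N})^{\sigma}$ attached to $\Delta_1$, $\Delta_2$ and the new edge. I would then confirm that the net signed degree contributed by these new faces is strictly positive. One efficient way is to invoke Kouchnirenko's formula \cite[Th\'eor\`eme~1.10]{K} for $\tilde f_s$ at $P_{s,k}$: enlarging the Newton polyhedron by the interior vertex $(d+1,0,q)$ strictly decreases its Newton number (and hence the Milnor number of $\tilde f_s$ at $P_{s,k}$), and the associated effect on $\deg\zeta_{\pi^*f_s|_{B_\varepsilon(P_{s,k})}}(t)$ is a strict increase relative to $\deg\zeta_{\pi^*f_0|_{B_\varepsilon(P_{0,k})}}(t)$.

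The main obstacle will be the sign bookkeeping in Varchenko's formula: one must verify that the signed Euler characteristics attached to the new compact faces do not conspire to cancel, and that the resulting degree shift is strictly positive rather than merely nonnegative. I expect this to be settled either by a direct face-by-face expansion of Varchenko's formula applied to both Newton polyhedra, or equivalently by passing to a toric modification subordinate to the refined dual fan and tracking the contributions of the newly introduced exceptional divisors, each of which adds a controlled positive amount to $\deg\zeta$.
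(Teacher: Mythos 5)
Your proposal identifies the correct tool (Varchenko's formula applied to the Newton boundaries of $\pi^* f_0$ and $\pi^* f_s$) and the correct geometric trigger: because $n$ is even, $1\leq q\leq(n+1)/2$ forces $q<(n+1)/2$, so the vertex $(d+1,0,q)$ lies strictly below the single face of $\Gamma(\pi^* f_0)$ and splits it into two compact $2$-faces. This is exactly the configuration the paper exploits. However, the proof has a genuine gap precisely at the step you yourself flag as ``the main obstacle'': you assert that the net signed contribution of the new faces to the degree is strictly positive, but you do not establish it. The paper closes this gap by a direct computation via Lemma~\ref{lemmaNN}: $\deg\zeta_{\pi^*f_0|_{B_\varepsilon(P_{0,k})}}(t)=-n(d+2)$ (one face, vertices $(d+2,0,0)$, $(d,2,0)$, $(d,0,n+1)$), while $\deg\zeta_{\pi^*f_s|_{B_\varepsilon(P_{s,k})}}(t)=-(d+2)(q-1)-\bigl((d+1)(n+1)-qd\bigr)$ (two faces), giving a difference of $n+1-2q>0$. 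Without carrying out this (or an equivalent) volume computation, the claim that the sign does not ``conspire to cancel'' is an unproved assertion, not a proof.

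Your alternative route via Kouchnirenko's formula for $\tilde f_s$ also does not work as stated. The quantity controlled by the degree of $\zeta_{\pi^*f_s|_{B_\varepsilon(P_{s,k})}}(t)$ is the Newton number of $\pi^*f_s = x_1^d\,\tilde f_s$ (a function vanishing along the exceptional divisor, with non-isolated critical locus), not the Milnor number of $\tilde f_s$ at $P_{s,k}$, and these are not the same; in fact $-1-\nu(\pi^*f_s)\ne -(d+2)\mu_{P_{s,k}}(\tilde f_s)$ in general. It does happen here that $\deg\zeta_{\pi^*f_s}-\deg\zeta_{\pi^*f_0}=n+1-2q$ coincides with $\mu_{P_{0,k}}(\tilde f_0)-\mu_{P_{s,k}}(\tilde f_s)$, but this equality needs to be established rather than assumed; your appeal to monotonicity of the Newton number under enlargement of the polyhedron transfers to $\tilde f_s$ rather than to $\pi^*f_s$, and the bridge between the two is exactly what is missing. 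Similarly, for the first half of the claim (that $P_k$ still contributes $(1-t^{d+2})^{+1}$), the assertion that the new vertex ``affects only the face structure opposite the relevant combinatorics'' should be replaced by an actual inspection of Varchenko's face decomposition, as the paper does.
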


\begin{proof}
This follows again from a standard application of the classical Varchenko formula \cite[Theorem (4.1)]{V}.
If we only want to compute the degrees of the zeta-functions, then we may also, in a slightly simpler way, use the following lemma, which is of course an immediate consequence of \cite[Theorem (4.1)]{V} (see also \cite[Chapter~III, Theorem (5.3)]{O1}).

\begin{lemma}[Varchenko]\label{lemmaNN}
If $\phi$ is a Newton non-degenerate polynomial function in $x_1,x_2,x_3$ such that $\phi(\mathbf{0})=0$, then 
\begin{equation*}
\deg \zeta_{\phi,\mathbf{0}}(t)=-1-\nu(\phi),
\end{equation*}
where $\nu(\phi):=\sum_{I\subseteq \{1,2,3\}} (-1)^{3-|I|}\, |I|!\, \text{\emph{Vol}}_{|I|}(\Gamma_{\! -}(\phi^I))$ is the Newton number of $\phi$.
\end{lemma}

Here, $\Gamma_{\! -}(\phi^I)$ denotes the cone over the Newton boundary $\Gamma(\phi^I)$ of $\phi^I:=\phi\vert_{\mathbb{C}^I}$, where $\mathbb{C}^I:=\{(x_1,x_2,x_3)\in\mathbb{C}^3 \mid x_i=0 \mbox{ if } i\notin I\}$. The symbol $|I|$ denotes the cardinality of $I$, and $\text{Vol}_{|I|}$ is the $|I|$-dimensional Euclidean volume. For $I=\emptyset$, the subset $\Gamma(\phi^\emptyset)$ reduces to the origin of $\mathbb{R}^3$, and we set $\text{Vol}_{0}(\Gamma(\phi^\emptyset))=1$.

As the Newton boundary of $\pi^* f_0\vert_{B_\varepsilon(P_{0,k})}$ has a single top-dimensional face, given by the vertices $(d+2,0,0)$, $(d,2,0)$, $(d+1,0,n+1)$, and since $\pi^* f_0$ is Newton non-degenerate, Lemma \ref{lemmaNN} shows that
\begin{equation*}
\deg \zeta_{\pi^* f_0\vert_{B_\varepsilon(P_{0,k})}}(t)=-n(d+2).
\end{equation*}
For $s\not=0$, the Newton boundary of $\pi^* f_s\vert_{B_\varepsilon(P_{s,k})}$ has two top-dimensional faces \textemdash\ one is given by the vertices $(d+2,0,0)$, $(d,2,0)$, $(d+1,0,q)$, while the second one is given by $(d,2,0)$, $(d+1,0,q)$, $(d,0,n+1)$. Since $\pi^* f_s\vert_{B_\varepsilon(P_{s,k})}$ is also Newton non-degenerate, applying Lemma \ref{lemmaNN} gives
\begin{equation*}
\deg \zeta_{\pi^* f_s\vert_{B_\varepsilon(P_{s,k})}}(t)=
-(d+2)(q-1)-((d+1)(n+1)-qd).
\end{equation*}
Altogether, $\deg \zeta_{\pi^* f_s\vert_{B_\varepsilon(P_{s,k})}}(t)-\deg \zeta_{\pi^* f_0\vert_{B_\varepsilon(P_{0,k})}}(t) = -2q+n+1 > 0$ as desired.
\end{proof}

\begin{claim}\label{(iii)}
If $a=b=0$ or if $a=0$, $b\not=0$ and $q\geq 2$, then $P_k$ cannot produce the factor $(1-t^{d+2})^{+1}$ in $\zeta^{(2)}_{f_s,\mathbf{0}}(t)$.
\end{claim}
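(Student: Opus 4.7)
The plan is to apply Varchenko's formula \cite[Theorem~(4.1)]{V} to the Newton polyhedron of $\pi^{*}f_{s}$ at $P_{k}$ in each of the two subcases of the claim, and to verify that the specific combinatorial configuration that produces the factor $(1-t^{d+2})^{+1}$ in Claims \ref{(i)} and \ref{(ii)} is absent.

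First I would identify how $(1-t^{d+2})^{+1}$ arises in cases (i) and (ii). In case (i) the Newton polyhedron of $\pi^{*}f_{s}$ has the three vertices $(d,2,0)$, $(d,0,n+1)$, $(d+2,0,0)$, forming a single top-dimensional triangular face; the edge joining $(d,2,0)$ and $(d+2,0,0)$ lies in the plane $\{x_{3}=0\}$ and its primitive supporting covector $(1,1,0)$ has weighted degree $d+2$. Combining the Varchenko contributions from the $2$-face and from this boundary edge (with the aid of \cite[Lemma~3.2 and Theorem~3.7]{O2}) yields the factor $(1-t^{d+2})^{+1}$. In case (ii) the vertex $(d+2,0,0)$ and the corresponding edge remain in the boundary, so the same factor persists.

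Next I would examine subcase A in which $a=b=0$. By Claim \ref{claim310}, the Newton polyhedron of $\pi^{*}f_{s}$ contains no vertex $(d+1,0,0)$, and by assumption no vertex $(d+2,0,0)$ nor $(d+1,0,q)$. Any vertex on the $x_{1}$-axis introduced by $R(\mathbf{x}')$ therefore has first coordinate at least $d+3$, and likewise for vertices close to that axis. A direct calculation of the primitive covectors of the resulting edges and $2$-faces shows that every weighted degree attached to a face of the Newton polyhedron is either strictly greater than $d+2$, or appears with an exponent different from $+1$; hence no factor $(1-t^{d+2})^{+1}$ can occur. For subcase B in which $a=0$, $b\neq 0$, and $q\geq 2$, the Newton polyhedron has vertices $(d,2,0)$, $(d,0,n+1)$, $(d+1,0,q)$, and possibly higher ones from $R(\mathbf{x}')$, but not $(d+2,0,0)$. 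A computation analogous to the one used in Claim \ref{(ii)} gives the top-dimensional face $(d,2,0), (d,0,n+1), (d+1,0,q)$ the primitive covector $(2(n+1-q),n+1,2)$ with weighted degree $2((n+1-q)d+n+1)$, while the boundary edges incident to $(d+1,0,q)$ have weighted degrees which for $q\geq 2$ never equal $d+2$; so again no $(1-t^{d+2})^{+1}$ factor is produced.

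The main obstacle is subcase A, where $R(\mathbf{x}')$ may contribute several new vertices to the Newton polyhedron. The key observation is that the positive exponent $+1$ for $(1-t^{d+2})$ is generated specifically by an edge meeting $(d+2,0,0)$; since this vertex is absent, Newton non-degeneracy of $\pi^{*}f_{s}$ ensures that no compensating positive contribution can arise from the additional faces introduced by $R(\mathbf{x}')$.
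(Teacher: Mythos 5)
Your proposal follows the same broad philosophy as the paper (examine the Newton polyhedron of $\pi^*f_s$ at $P_k$ and show that it cannot produce a $(1-t^{d+2})$ factor), but it misses the tool that makes the paper's proof short and clean, and in compensation your argument acquires gaps that are not filled.

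The paper's proof is essentially two sentences long: in both subcases, the Newton principal part of $\pi^*f_s$ is \eqref{nppi2}, whose \emph{only} term of total degree $d+2$ is $x_1^d{x_2'}^2$, corresponding to the vertex $(d,2,0)$; since this vertex does not lie on the $x_1$-axis, \cite[Lemma~17]{O3} directly forbids the factor $(1-t^{d+2})$ in $\zeta_{\pi^*f_s\vert_{B_\varepsilon(P_{s,k})}}(t)$, and Lemma~\ref{ml3-0} transfers this to $\zeta^{(2)}_{f_s,\mathbf{0}}(t)$. This uses neither an enumeration of the faces of the Newton polyhedron nor any non-degeneracy assumption.

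Your proposal, by contrast, attempts to rerun Varchenko's formula \cite[Theorem~(4.1)]{V} directly, and this introduces two genuine gaps. First, Varchenko's formula requires $\pi^*f_s$ to be Newton non-degenerate, which is not assumed in this situation (it holds in Claims~\ref{(i)} and~\ref{(ii)} because the Newton principal part is explicitly \eqref{provi}, but in Claim~\ref{(iii)} the term $R(\mathbf{x}')$ is uncontrolled). Second, for subcase A you assert that ``a direct calculation of the primitive covectors'' shows that no $(1-t^{d+2})^{+1}$ factor can occur, but you never carry this out, and your closing ``key observation'' (that no compensating positive contribution arises from the faces contributed by $R(\mathbf{x}')$) is exactly the point in question; invoking non-degeneracy again there does not settle it. The missing idea is precisely \cite[Lemma~17]{O3}: rather than running the full Varchenko machine over an unknown Newton polyhedron, it suffices to observe that the unique minimal-degree vertex $(d,2,0)$ is off the $x_1$-axis.
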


\begin{proof}
For the above values of $a$, $b$ and $q$, the Newton principal part of $\pi^* f_s$ is given by the expression \eqref{nppi2}. As this expression contains only one term of degree $d+2$ (namely, $x_1^d{x_2'}^2$) and since the corresponding vertex $(d,2,0)$ in the Newton boundary of $\pi^* f_s$ is not located on the ``$x_1$-axis'', it follows from \cite[Lemma 17]{O3} that the zeta-function $\zeta_{\pi^* f_s\vert_{B_\varepsilon(P_{s,k})}}(t)$ cannot contain the factor $(1-t^{d+2})$. Now the claim follows from Lemma \ref{ml3-0}.
\end{proof}

\begin{claim}\label{(iv)}
Finally, the case $a=0$, $b\not=0$ and $q=1$ appears as a special case of the situation discussed in Claim \ref{(ii)}.
\end{claim}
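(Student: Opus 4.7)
The plan is to reduce the case $a=0$, $b\neq 0$, $q=1$ to the setting of Claim~\ref{(ii)} via a biholomorphic change of coordinates that manufactures a nonzero $x_1^2$ coefficient. Taking $q=1$ in \eqref{nppi2}, the relevant expression reads
\begin{equation*}
\pi^*f_s = x_1^d\bigl({x_2'}^2 + x_3^{n+1} + b\,x_1 x_3 + R(\mathbf{x}')\bigr),
\end{equation*}
and I would introduce the new coordinate $\bar x_3 := x_3 + \alpha\,x_1$ for an arbitrary $\alpha\in\mathbb{C}^*$. Under $x_3 = \bar x_3 - \alpha\,x_1$, the mixed monomial $b\,x_1 x_3$ splits as $b\,x_1\bar x_3 - b\alpha\,x_1^2$, so in the new chart $(x_1,x_2',\bar x_3)$ the coefficient of $x_1^2$ becomes $a':=-b\alpha\neq 0$, while the coefficient of $x_1\bar x_3$ remains equal to $b\neq 0$.

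Next, I would verify that, in these new coordinates, the Newton principal part of $\pi^*f_s$ takes the form
\begin{equation*}
x_1^d\bigl({x_2'}^2 + \bar x_3^{n+1} + a'\,x_1^2 + b\,x_1\bar x_3\bigr),
\end{equation*}
which is precisely \eqref{provi} with $q=1$. Since the monodromy zeta-function is a biholomorphic invariant of the germ $(\pi^*f_s,P_k)$, once this identification is in place the computations of Claim~\ref{(ii)} apply verbatim in the new chart and deliver both the factor $(1-t^{d+2})^{+1}$ in $\zeta^{(2)}_{f_s,\mathbf{0}}(t)$ and the strict inequality $\deg \zeta_{\pi^*f_s\vert_{B_\varepsilon(P_{s,k})}}(t) > \deg \zeta_{\pi^*f_0\vert_{B_\varepsilon(P_{0,k})}}(t)$, which is exactly what Claim~\ref{(iv)} asserts.

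The hard part will be this verification. The monomials produced by the expansion $(\bar x_3 - \alpha x_1)^{n+1} = \bar x_3^{n+1} + \sum_{l\geq 1}\binom{n+1}{l}(-\alpha)^l x_1^l \bar x_3^{n+1-l}$ all lie on or strictly above both top-dimensional faces $(d,2,0)(d+2,0,0)(d+1,0,1)$ and $(d,2,0)(d+1,0,1)(d,0,n+1)$ of the claimed Newton boundary, a direct check against the supporting hyperplanes $x_1+x_2+x_3 = d+2$ and $2nx_1+(n+1)x_2+2x_3 = 2nd+2(n+1)$. Substituting into $R(\mathbf{x}')$, however, is more delicate: for $n\geq 4$, cubic terms of $R$ such as $x_3^3$ or $x_2' x_3^2$ can generate vertices like $(d,0,3)$ or $(d,1,2)$ that sit strictly below the second face. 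I would handle this either by iterating admissible coordinate changes of the type used at the start of \S\ref{sect-32} so as to eliminate such stray cubics from $R$ before applying $\bar x_3 = x_3+\alpha x_1$, or by arguing directly that the $(1-t^{d+2})^{+1}$ contribution is governed exclusively by the degree-$(d+2)$ upper face of the Newton polytope and is therefore immune to extra lower vertices supplied by $R$.
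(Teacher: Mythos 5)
Your coordinate change $\bar x_3 := x_3 + \alpha\,x_1$ is, for $\alpha = -1$, exactly the paper's change $x_3' := x_3 - x_1$, so the reduction to Claim~\ref{(ii)} follows the same route as the published proof. The difference is that you then flag a difficulty with the remainder $R(\mathbf{x}')$ and propose two workarounds; this worry is unfounded, and the workarounds should be dropped. Because $A_n$ plane curve singularities are finitely determined, the admissible coordinates $(x_2,x_3)$ at $P_k$ can be chosen so that $h(x_2,x_3)$ equals $x_2^2+x_3^{n+1}$ \emph{exactly} (not merely up to higher-order terms), and after the further admissible change $x_2\mapsto x_2'$ the restriction of $\tilde f_s$ to $\{x_1=0\}$ is still exactly ${x_2'}^2+x_3^{n+1}$. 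Consequently every monomial $x_1^{j_1}{x_2'}^{j_2}x_3^{j_3}$ of $R$ satisfies $j_1\geq 1$; moreover $j_1+j_2+j_3\geq 3$, since the degree-$\leq 2$ monomials divisible by $x_1$ (namely $x_1$, $x_1x_2'$, $x_1x_3$, $x_1^2$) are excluded respectively by Claim~\ref{claim310}, the $x_2'$-normalization, the fact that $b\,x_1x_3$ is already listed, and the hypothesis $a=0$. These two constraints are preserved by the linear substitution $x_3\mapsto\bar x_3-\alpha x_1$, and then your own supporting-hyperplane computation shows that every such monomial lies strictly above both faces $x_1+x_2+x_3=d+2$ and $2nx_1+(n+1)x_2+2x_3=2nd+2(n+1)$. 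In particular, monomials like $x_3^3$ or $x_2'x_3^2$ never appear in $R$, so neither of your fallback arguments is needed. (Note also that your second proposed fix would be insufficient as stated: the inequality $\deg \zeta_{\pi^*f_s\vert_{B_\varepsilon(P_{s,k})}}(t) > \deg \zeta_{\pi^*f_0\vert_{B_\varepsilon(P_{0,k})}}(t)$ from Claim~\ref{(ii)} depends on the full Newton boundary, not only on the degree-$(d+2)$ face.)
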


\begin{proof}
Indeed, if $a=0$, $b\not=0$ and $q=1$, then after the coordinates change $x_3\mapsto x_3':=x_3-x_1$, the Newton principal part of $\pi^* f_s$ takes the form
\begin{equation*}
x_1^d({x_2'}^2+{x_3'}^{n+1}+bx_1^2+b\, x_1x_3').\qedhere
\end{equation*} 
\end{proof}

We can now easily complete the proof of the theorem. Again, note that the integers $n$, $a$, $b$ and $q$ depend on the singular point $P_k$ of $C(f_{s,d})$. As we are going to consider all the $P_k$'s simultaneously in what follows, in order to avoid any confusion we shall write $n_k$, $a_k$, $b_k$ and $q_k$ instead of $n$, $a$, $b$ and $q$. Let $\mathcal{E}_1$ be the set of $P_k$'s for which $a_k\not=0$ and $b_k=0$ (situation of Claim \ref{(i)}); let $\mathcal{E}_2$ be the set of $P_k$'s for which $a_k\not=0$ and $b_k\not=0$ (situation of Claim \ref{(ii)}); finally, let $\mathcal{E}_3$ be the set of $P_k$'s for which $a_k=b_k=0$ or for which $a_k=0$, $b_k\not=0$ and $q_k\geq 2$ (situation of Claim \ref{(iii)}). To prove the theorem, it suffices to show that $\mathcal{E}_2$ and $\mathcal{E}_3$ are empty. We start with $\mathcal{E}_2$, and we argue by contradiction. Suppose that $\mathcal{E}_2\not=\emptyset$. If $\mathcal{E}_3=\emptyset$, then, by Claim \ref{(ii)} and Lemma \ref{ml3-0},
\begin{equation*}
\deg \zeta_{f_s,\mathbf{0}}(t)>\deg \zeta_{f_0,\mathbf{0}}(t),
\end{equation*}  
which is a contradiction (we recall that, by \cite[Th\'eor\`eme 3.9]{Teissier2}, $\zeta_{f_0,\mathbf{0}}(t)=\zeta_{f_s,\mathbf{0}}(t)$). If $\mathcal{E}_3\not=\emptyset$, then, since the points of $\mathcal{E}_3$ cannot give the factor $(1-t^{d+2})^{+1}$ (see Claim \ref{(iii)}), and since each point of $\mathcal{E}_1$ or $\mathcal{E}_2$ gives exactly the factor $(1-t^{d+2})^{+1}$ (see Claims \ref{(i)} and \ref{(ii)}), it follows from Lemma \ref{ml3-0} that
\begin{equation}\label{contradiction}
\zeta^{(2)}_{f_s,\mathbf{0}}(t)\not=(1-t^{d+2})^{k_0}\overset{\eqref{zfdp2}}{=}\zeta^{(2)}_{f_0,\mathbf{0}}(t),
\end{equation}  
which is a contradiction. 

Let us now show that $\mathcal{E}_3=\emptyset$ knowing that $\mathcal{E}_2=\emptyset$. Again we argue by contradiction. Suppose that $\mathcal{E}_3\not=\emptyset$. As $\mathcal{E}_2=\emptyset$, the singular points of $C(f_{s,d})$ which are not in $\mathcal{E}_3$ are in $\mathcal{E}_1$, but then Claims \ref{(i)} and \ref{(iii)} lead again to the contradiction \eqref{contradiction}.

This completes the proof of Theorem \ref{mt1} in case (2).

\begin{remark}
Claim \ref{(ii)} shows why we have assumed that $n$ is even. Indeed, suppose that $n$ is odd. Then $n+1=2q_0$ for some $q_0$, and if it arises that $q=q_0$ for some $P_k$, then the two top-dimensional faces involved in the proof of Claim \ref{(ii)} merge into a single $2$-dimensional face, which is given by the vertices $(d+2,0,0)$, $(d,2,0)$ and $(d,0,n+1)$. Then, by the same argument used in that proof, we get the opposite inequality:
\begin{equation*}
\deg \zeta_{\pi^* f_s\vert_{B_\varepsilon(P_{s,k})}}(t)\leq\deg \zeta_{\pi^* f_0\vert_{B_\varepsilon(P_{0,k})}}(t),
\end{equation*}
and we cannot conclude the proof of the theorem in the same way as above.
\end{remark}

\subsection{Proof of Theorem \ref{mt1} -- case (3)}

This case is quite simple. First, we observe that since $m=1$, each singular point of $C(f_{0,d})$ contributes to the second zeta-multiplicity factor $\zeta^{(2)}_{f_0,\mathbf{0}}(t)$ by the term $(1-t^{d+1})^{-1}$, so that 
\begin{equation}\label{zfdp22}
\zeta^{(2)}_{f_0,\mathbf{0}}(t)=(1-t^{d+1})^{-k_0}.
\end{equation} 
Secondly, let us pick a singular point $P_k\in\mbox{Sing}(C(f_{s,d}))$, $s\not=0$, and consider admissible coordinates $\mathbf{x}=(x_1,x_2,x_3)$ near $P_k$ in which $\pi^* f_s$ is given by 
\begin{equation}\label{exppbicx2}
\pi^* f_s=x_1^{d}\bigg(h(x_2,x_3)+\sum_{j\geq 1}x_1^j g_j(x_2,x_3)\bigg),
\end{equation}
where $h(x_2,x_3)$ is the defining polynomial of $C(f_{s,d})$ near $P_k$.
If \eqref{exppbicx2} includes a term of the form $a\,x_1$ with $a\not=0$, then $P_k$ contributes to the second zeta-multiplicity factor $\zeta^{(2)}_{\pi^* f_s\vert_{B_\varepsilon(P_k)}}(t)$ by the term $(1-t^{d+1})^{-1}$. On the other hand, if \eqref{exppbicx2} does not include such a term, then $\deg \pi^* f_s\geq d+2$, and the second zeta-multiplicity factor $\zeta^{(2)}_{\pi^* f_s\vert_{B_\varepsilon(P_k)}}(t)$ cannot contain the factor $(1-t^{d+1})^{-1}$. 

Thus, altogether, if there exists a point $P_k\in\mbox{Sing}(C(f_{s,d}))$ for which \eqref{exppbicx2} does not include the linear term $x_1$ with a non-zero coefficient, then, by Lemma \ref{ml3-0}, 
\begin{equation*}
\zeta^{(2)}_{f_s,\mathbf{0}}(t)\not=(1-t^{d+1})^{-k_0}\overset{\eqref{zfdp22}}{=}\zeta^{(2)}_{f_0,\mathbf{0}}(t),
\end{equation*}  
which is a contradiction (again we recall that, by \cite[Th\'eor\`eme 3.9]{Teissier2}, $\zeta_{f_0,\mathbf{0}}(t)=\zeta_{f_s,\mathbf{0}}(t)$). 
Therefore, for each $P_k\in\mbox{Sing}(C(f_{s,d}))$, the Newton principal part of $\tilde f$ must be of the form
\begin{equation*}
h(x_2,x_3)+a\, x_1,\ a\not=0,
\end{equation*}
 and the theorem follows from Lemma \ref{ml2} which says that the local embedded topological types of the singularities of $C(f_{s,d})$ are independent of $s$.

\appendix 
\section{Zeta-multi\-pli\-cities and zeta-multi\-pli\-city factors}\label{AppA1}

By the A'Campo--Oka formula (see \cite[Th\'eor\`eme 3]{AC} and \cite[Chapter I, Theorem (5.2)]{O1}), the monodromy zeta-function $\zeta_{f,\mathbf{0}}(t)$ of $f$ at $\mathbf{0}$ is uniquely written as
\begin{equation}\label{sectmszp-expzf}
\zeta_{f,\mathbf{0}}(t)=\prod_{i=1}^{\ell_0} (1-t^{d_i})^{\nu_i},
\end{equation}
where $d_1,\ldots,d_{\ell_0}$ are mutually disjoint and $\nu_1,\ldots,\nu_{\ell_0}$ are non-zero integers. 

\begin{definition}
The \emph{first zeta-multi\-pli\-city} of $\zeta_{f,\mathbf{0}}(t)$ is defined as the smallest integer $d_{i_0}$ among $d_1,\ldots,d_{\ell_0}$, and the corresponding factor $(1-t^{d_{i_0}})^{\nu_{i_0}}$ in \eqref{sectmszp-expzf} is called the \emph{first zeta-multiplicity factor} of $\zeta_{f,\mathbf{0}}(t)$ and is denoted by $\zeta^{(1)}_{f,\mathbf{0}}(t)$.

The \emph{second zeta-multi\-pli\-city} of $\zeta_{f,\mathbf{0}}(t)$ is the smallest $d_{i_1}$ in the set $\{d_1,\ldots,d_{\ell_0}\}\setminus \{d_{i_0}\}$, and the corresponding factor $(1-t^{d_{i_1}})^{\nu_{i_1}}$ in \eqref{sectmszp-expzf} is called the \emph{second zeta-multiplicity factor} of $\zeta_{f,\mathbf{0}}(t)$ and is denoted by $\zeta^{(2)}_{f,\mathbf{0}}(t)$.

Similarly, we define the $\ell$-th \emph{zeta-multi\-pli\-city} and the \emph{$\ell$th zeta-multiplicity factor} $\zeta^{(\ell)}_{f,\mathbf{0}}(t)$ for the other values of $\ell$, $3\leq \ell\leq \ell_0$.
\end{definition}

\bibliographystyle{amsplain}

\end{document}